\numberwithin{theorem}{section}
\newcommand{\TheTitle}{An $L1$ approximation for a fractional reaction-diffusion equation,  a second-order error analysis over time-graded meshes}
\newcommand{\TheAuthors}{Kassem Mustapha}
\title{{\TheTitle}\thanks{{This work was supported by the KFUPM.}
}}
\author{
Kassem Mustapha\thanks{Department of Mathematics and Statistics,
King Fahd University of Petroleum and Minerals,
Dhahran 31261, Saudi Arabia (\email{kassem@kfupm.edu.sa}).}
}
\newcommand{\K}{\mathbf{L}}
\newcommand{\Ba}{\partial_t^{1-\alpha}}
\newcommand{\R}{\mathbb{R}}
\newcommand{\fInt}{\mathcal{I}}
\newcommand{\iprod}[1]{\langle#1\rangle}
\newcommand{\bigiprod}[1]{\biggl\langle#1\biggr\rangle}
\newcommand{\matM}{\boldsymbol{M}}
\newcommand{\W}{\mathcal W}
\newcommand{\matG}{\boldsymbol{G}}
\newcommand{\A}{\mathcal{A}}
\begin{document}

\date{\today}

\maketitle

\begin{abstract}
A time-stepping $L1$ scheme for subdiffusion equation with a Riemann--Liouville time-fractional derivative is developed and analyzed. This is the first paper to show that the $L1$ scheme for the model problem under consideration  is second-order accurate (sharp error estimate) over nonuniform time-steps. The established convergence analysis is novel, innovative and concise. For completeness, the $L1$ scheme is combined with the standard Galerkin finite elements for the spatial discretization, which will then define a fully-discrete numerical scheme. The error analysis for this scheme is also investigated.  To support our theoretical contributions, some numerical tests are provided at the end. The considered (typical)  numerical example  suggests that the imposed time-graded meshes assumption can be further relaxed. 
\end{abstract}

\begin{keywords}
Fractional diffusion, $L1$ approximations, finite element  method, optimal error analysis, graded meshes
\end{keywords}
\section{Introduction}
Consider the following  time-fractional diffusion equation,
\begin{equation}\label{eq: ibvp}
\partial_t u(x,t)
	+\partial_t^{1-\alpha}\A u(x,t)
	=f(x,t),
	\quad\text{for $x\in\Omega$ and $0<t<T$,}
\end{equation}
 with  initial condition $u(x,0)=u_0(x)$,
where $\partial_t=\partial/\partial t$, $\Omega$ is a convex polyhedral
domain in~$\R^d$ ($d\ge1$), and the spatial elliptic operator 
\[\A u(x,t)=-\nabla\cdot(\kappa_\alpha(x)\nabla u(x,t))+d(x)u(x,t)\,.\]
 The diffusivity
coefficient $c_1\le \kappa_\alpha\le c_2$ on $\Omega$ for some positive constants $c_1$ and $c_2,$ and the reaction coefficient { $d$ is such that the bilinear form associated with the elliptic operator $\A$ (see \eqref{eq: bilinear}) is positive definite  on the Sobolev space $H_0^1 (\Omega)$. That is, it is sufficient (but not necessary) to impose that $d \ge 0$ on $\Omega$.} Both, $\kappa$ and $d$ are assumed to be sufficiently regular functions.  

The fractional exponent is restricted
to the range~$0<\alpha<1$ and  the fractional derivative is
taken in the Riemann--Liouville sense, that is,
$\partial_t^{1-\alpha}u=\partial_t\fInt^\alpha u$, where the
fractional integration operator~$\fInt^\alpha$ is defined by
\[
\fInt^\alpha v(t)
	=\int_0^t\omega_\alpha(t-s)v(s)\,ds,
	\quad\omega_\alpha(t)=\frac{t^{\alpha-1}}{\Gamma(\alpha)}.
\]
We impose  a homogeneous Dirichlet boundary condition,
\begin{equation}\label{eq: Dirichlet bc}
u(x,t)=0\quad\text{for $x\in\partial\Omega$ and $0<t<T$.}
\end{equation}
Over the last decade, various time-stepping numerical methods were invesitigated for solving the fractional diffusion equation \eqref{eq: ibvp}, see for example \cite{Mustapha2011,Mustapha2015} and related refreences therein. The motivation of this paper is propose and analyze a second-order accurate time-stepping $L1$ scheme for solving the model problem \eqref{eq: ibvp}.  A nonuniform time mesh is employed (see \eqref{eq: time mesh}) to compensate for the singularity of the continuous solution near $t=0$ \cite{McLean2010,McLeanMustaphaAliKnio2019}. {Such graded meshes \eqref{eq: time mesh} were originally used in the context of Volterra integral equations with weakly singular kernels, see for example \cite{Brunner2004, BrunnerPedasVainikko1999, ChandlerGraham1988}, and see also \cite{Mustapha2013} for a recent concrete superconvergence error analysis. Later on, time-graded meshes were successfully used to improve the performance of different numerical methods applied to fractional diffusion and fractional wave equations, see for instance \cite{McLeanMustapha2007, McLeanThomeeWahlbin1996, Mustapha2011, MustaphaAbdallahFursti2014}}. Nonuniform meshes are flexible and reasonably convenient for practical implementation, however they can significantly complicate the numerical error analysis of schemes. { 
The time-graded mesh properties are carefully  used in our error analysis  to achieve  optimal-order convergence rates.}   The designed  approach   is novel and concise, some innovative ideas are employed  to estimate efficiently certain candidates. For completeness, we discretize in space using the standard Galerkin finite elements, where  the error analysis  is also examined.   

To the best of our knowledge, we are not aware of any work that showed a  second-order error bounds  of the  popular  time-stepping $L1$ scheme applied to the  model problem \eqref{eq: ibvp}.  However, for the time-fractional (Caputo derivative) diffusion problem (often assuming  $\kappa$ to be constant and the reaction coefficient $d$ to be zero): 
\begin{equation}\label{eq: ibvp caputo}
\fInt^{1-\alpha}\partial_t u(x,t)
+	\A u(x,t)=f(x,t),
	\quad\text{for $x\in\Omega$ and $0<t<T$,}
\end{equation}
 various types of $L1$ time-stepping schemes were developed and studied  over the last decade, see for example \cite{Alikhanov2015,ChenLiuAnhTurner2010,GaoSun2011,JiangMa2011,JinLazarovZhou,LiaoLiZhang2018,LiaoMcLeanZhang2019,LinXu2007,
 StynesORiordanGracia2017,WangZhaoChenWeiTang2018,YanKhanFord2018,ZhaoChenBuLiuTang2017,ZhaoZhangShiLiuTurner2016}.
In most studies, a convergence rate of order $2-\alpha$ was proved. Furthermore, the singularity of the continuous solution $u$ near $t=0$  was taken into account in a few papers only, however the rest frequently ignored this fact. In contrast, a time-stepping discontinuous Petrov-Galerkin method using piecewise polynomials of degree $m$ was introduced and analyzed in \cite{MustaphaAbdallahFursti2014} { for solving  problem \eqref{eq: ibvp caputo}}. For the special case  $m=1$, this method reduces to a second-order accurate  time-stepping  $L1$ scheme { as the numerical results suggested therein, see \cite[Section 5]{MustaphaAbdallahFursti2014}.}

Outline of the paper. In section \ref{sec: scheme}, we define our semi-discrete time-stepping $L1$ approximation scheme (see \eqref{fully}) and describe briefly the implementation steps. Section \ref{sec: error analysis} is dedicated to show  our sharp error results. It is assumed that the continuous solution $u$  of problem \eqref{eq: ibvp} satisfies the following regularity properties:   
\begin{equation}\label{eq: regularity} 
 \|u(t)\|_2\le M\quad{\rm and}\quad \|u'(t)\|_2+t^{1-\alpha/2}\|u''(t)\|_1+t^{2-\alpha/2}\|u'''(t)\|_1
	\le M t^{\sigma-1},
\end{equation}
 for some  positive  constants $M$ and $\sigma$. In \eqref{eq: regularity}, $'$ denotes the time partial derivative and  $\|\cdot\|_\ell$ is the norm on the usual Sobolev space $H^\ell(\Omega)$ which reduces to the $L_2(\Omega)$-norm when $\ell=0$ denoted by $\|\cdot\|$.  As an example,  when $f(t)\equiv0$~and 
$u_0\in H^1_0(\Omega)\cap H^{2.5^-}(\Omega)$, these assumptions hold true for $\sigma=\frac{\alpha^-}{4}$, see \cite{McLean2010,McLeanMustaphaAliKnio2019} for more details. 

At each time level $t_n$, an optimal $O(\tau^2 t_n^{\sigma+\alpha-2/\gamma})$-rate of  convergence is proved in Theorem \ref{thm: time convergence}, assuming that the  time mesh exponent $\gamma > \max\{2/(\sigma+\alpha/2),2/(\sigma+3\alpha/2-1/2)\}$ (see \eqref{eq: time mesh} for the definition of the time-graded mesh). Noting that, for $1/2\le \alpha<1$ (which is practically the interesting case in terms of subdiffusion), $\sigma+\alpha/2\le \sigma+3\alpha/2-1/2$, and so it is sufficient to assume $\gamma > 2/(\sigma+\alpha/2)$. Our error analysis  involves various types of clever splitting of the error terms followed by a careful estimation of each one of them. 
We avoid using  { any versions of the weakly singular discrete Gronwall's inequalities \cite[Theorem 6.1]{DixonMcKee1986}} to guarantee that the error coefficients do not blowup exponentially with the time level $t_n.$ At the preliminary stage, our error analysis   makes use of   the inequality in the next lemma \cite[Lemma 2.3]{McLeanMustaphaAliKnio2019o} which will eventually enable us to establish pointwise estimates for certain terms.
\begin{lemma}\label{lem: pointwise bound}
Let $0<\alpha\le 1$. If the function 
$\phi$ is in the space $W^1_1\bigl((0,t);L_2(\Omega)\bigr)$ satisfies
$\phi(0)=\fInt^\alpha\phi'(0)=0$, then
\[\|\phi(t)\|^2\le 2\omega_{2-\alpha}(t)\int_0^{t}\iprod{\fInt^\alpha\phi'(t),\phi'(t)}\,dt,\]
\end{lemma}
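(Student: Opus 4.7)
My plan is to introduce $\psi := \fInt^\alpha \phi'$, so that the hypothesis $\fInt^\alpha\phi'(0)=0$ becomes $\psi(0)=0$. Combined with $\phi(0)=0$ and the semigroup identity $\fInt^{1-\alpha}\fInt^\alpha = \fInt^{1}$, this immediately yields the representation
\[
\phi(t) = \fInt^{1-\alpha}\psi(t) = \int_0^t \omega_{1-\alpha}(t-s)\psi(s)\,ds.
\]

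The first step is to apply the Cauchy--Schwarz inequality in the $L_2(\Omega)$-norm with the weight $\omega_{1-\alpha}(t-\cdot)$, and to use the elementary identity $\int_0^t \omega_{1-\alpha}(t-s)\,ds = \omega_{2-\alpha}(t)$, to obtain
\[
\|\phi(t)\|^2 \le \omega_{2-\alpha}(t)\int_0^t \omega_{1-\alpha}(t-s)\|\psi(s)\|^2\,ds.
\]

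The crux of the argument is then the ``inner'' inequality
\[
\int_0^t \omega_{1-\alpha}(t-s)\|\psi(s)\|^2\,ds \le 2\int_0^t \iprod{\psi(s),\phi'(s)}\,ds,
\]
which, combined with the previous display, gives the claim. To establish it, I would exploit $\psi(0)=0$ to write $\phi'(s)=\fInt^{1-\alpha}\psi'(s)$ (via integration by parts in the defining convolution) and $\|\psi(s)\|^2 = 2\int_0^s\iprod{\psi(r),\psi'(r)}\,dr$. After substituting these into both sides and exchanging the order of integration, the inequality recasts as the nonnegativity of a symmetric quadratic form in $\psi'$ whose kernel is built from differences of $\omega_{2-\alpha}$. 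The main obstacle is verifying this positivity; it should rest on the complete monotonicity of $\omega_{1-\alpha}$ for $0<\alpha<1$, exploited through the semigroup factorization $\omega_{1-\alpha}(t) = \int_0^t \omega_{(1-\alpha)/2}(t-u)\omega_{(1-\alpha)/2}(u)\,du$, which exhibits the offending kernel as a Gram-type expression and hence as a sum of squares.
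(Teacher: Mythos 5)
The paper does not prove this lemma at all: it imports it verbatim from \cite[Lemma 2.3]{McLeanMustaphaAliKnio2019o}, so there is no in-house argument to compare yours against; I can only judge the proposal on its own terms. Your opening is correct and is the standard one: $\phi(0)=0$ gives $\phi=\fInt^{1}\phi'=\fInt^{1-\alpha}\psi$ with $\psi=\fInt^\alpha\phi'$, and the weighted Cauchy--Schwarz step using $\int_0^t\omega_{1-\alpha}(t-s)\,ds=\omega_{2-\alpha}(t)$ is exactly right. Your ``inner'' inequality is also a true statement: since $\psi(0)=0$ and $\partial_s\fInt^{1-\alpha}\psi=\phi'$, it is precisely the Alikhanov--Vergara--Zacher inequality $\iprod{\psi,\partial_s^\alpha\psi}\ge\tfrac12\,\partial_s^\alpha\|\psi\|^2$ integrated over $(0,t)$, the right-hand side integrating to $\fInt^{1-\alpha}\bigl(\|\psi\|^2\bigr)(t)=\int_0^t\omega_{1-\alpha}(t-s)\|\psi(s)\|^2\,ds$. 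So the skeleton is sound and produces the stated constant $2\omega_{2-\alpha}(t)$.

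The gap is the positivity claim, which is the entire mathematical content of the lemma and which you leave at ``it should rest on complete monotonicity.'' Carrying out your substitutions, the form to be shown nonnegative is $\int_0^t\int_0^t K(q,r)\iprod{\psi'(q),\psi'(r)}\,dq\,dr$ with symmetric kernel $K(q,r)=\omega_{2-\alpha}(t-\min(q,r))-\omega_{2-\alpha}(|q-r|)$. This $K$ is indeed positive semidefinite, but not by the mechanism you propose: the factorization $\omega_{2-\alpha}=\omega_{1-\alpha/2}*\omega_{1-\alpha/2}$ does not exhibit $K$ as a Gram kernel --- the natural candidate $\int_{\max(q,r)}^t\omega_{1-\alpha/2}(u-q)\,\omega_{1-\alpha/2}(u-r)\,du$ is a sum of squares but is an incomplete convolution, a different kernel from $K$. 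Two honest ways to close the gap: (i) prove the pointwise inequality $2\iprod{\psi(s),\phi'(s)}\ge\partial_s\bigl(\fInt^{1-\alpha}\|\psi\|^2\bigr)(s)$ directly via the classical identity that isolates $\|\psi(s)-\psi(r)\|^2$ against $-\omega_{1-\alpha}'\ge0$, and integrate; or (ii) set $x=t-q$, $y=t-r$, so that up to a positive constant $K=\max(x,y)^{1-\alpha}-|x-y|^{1-\alpha}$, use the Bernstein representation $a^{1-\alpha}=c_\alpha\int_0^\infty(1-e^{-\lambda a})\lambda^{\alpha-2}\,d\lambda$, and check that $e^{-\lambda|x-y|}-e^{-\lambda\max(x,y)}=e^{-\lambda x}e^{-\lambda y}\bigl(e^{2\lambda\min(x,y)}-e^{\lambda\min(x,y)}\bigr)$ is positive semidefinite for each $\lambda>0$ because $m\mapsto e^{2\lambda m}-e^{\lambda m}$ is nonnegative, nondecreasing and vanishes at $0$. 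Finally, for $\phi\in W^1_1\bigl((0,t);L_2(\Omega)\bigr)$ the derivative $\psi'$ you expand everything in need not exist, so route (ii) must be supplemented by a density argument, whereas route (i) avoids $\psi'$ altogether.
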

  where $\iprod{u,v}$ is the $L_2$-inner product on the spatial domain $\Omega$.
 
Although the main scope of this paper is on the optimal error analysis of the time-stepping $L1$ scheme, the error analysis from the full discretization is also studied.   In section \ref{sec: fully-discrete}, the semi-discrete time-stepping scheme \eqref{fully} is discretized in space via the standard continuous piecewise-linear  Galerkin method (see \eqref{fully 2 2}), which will then define a fully-discrete numerical method. The implementation of the fully-discrete solution is briefly discussed. Compared to the error analysis  in section \ref{sec: error analysis}, an additional term has occurred. Consequently, an additional error of order $O(h^2)$ ($h$ is the maximum spatial mesh element size) is derived assuming that $\sigma>(1-\alpha)/2$, see Theorem \ref{thm: CR}. Numerically, it is observed that this condition is not necessary.  In this part of our error analysis,  the next lemma  \cite[Lemma 3.1]{MustaphaSchotzau2014} is used. 
\begin{lemma}\label{lem: alpha dep}
If the functions   $\phi$ and $\psi$  are in the space $L_2\bigl((0,t);L_2(\Omega)\bigr)$, then  for  $0<\alpha<1$ and for $\epsilon>0$,   
\[\biggl|\int_0^t\iprod{\phi,\fInt^\alpha\psi}\,ds\biggr|
	\le\frac{1}{4\epsilon(1-\alpha)^2}\int_0^t\iprod{\phi,\fInt^\alpha\phi}\,ds
	+\epsilon\, \int_0^t\iprod{\psi,\fInt^\alpha\psi}\,ds\,.\]
\end{lemma}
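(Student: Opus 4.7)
The plan is to reduce the estimate to a Fourier multiplier inequality. After extending $\phi$ and $\psi$ by zero outside $(0,t)$, the operator $\fInt^\alpha$ becomes convolution in time with the locally integrable kernel $\omega_\alpha$, so Plancherel in the time variable (with $x\in\Omega$ treated as a parameter) is the natural tool. The Fourier transform of $\omega_\alpha$ is $(i\xi)^{-\alpha}=|\xi|^{-\alpha}e^{-i(\alpha\pi/2)\,\mathrm{sgn}\,\xi}$, whose real part equals $|\xi|^{-\alpha}\cos(\alpha\pi/2)$.

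The first step is to obtain the coercivity identity
\[
\int_0^t\iprod{\phi,\fInt^\alpha\phi}\,ds
   = \cos(\alpha\pi/2)\int_\Omega\!\int_\R|\widehat\phi(\xi,x)|^2\,|\xi|^{-\alpha}\,d\xi\,dx,
\]
the imaginary part dropping out by oddness in $\xi$. A Cauchy--Schwarz step in the weighted measure $|\xi|^{-\alpha}\,d\xi\,dx$ then bounds the cross term by
\[
\Bigl|\int_0^t\iprod{\phi,\fInt^\alpha\psi}\,ds\Bigr|
 \le \frac{1}{\cos(\alpha\pi/2)}
     \Bigl(\int_0^t\iprod{\phi,\fInt^\alpha\phi}\,ds\Bigr)^{1/2}
     \Bigl(\int_0^t\iprod{\psi,\fInt^\alpha\psi}\,ds\Bigr)^{1/2},
\]
and Young's inequality $XY\le X^2/(4\epsilon)+\epsilon Y^2$ delivers the target estimate with the sharper constant $1/[4\epsilon\cos^2(\alpha\pi/2)]$ in place of $1/[4\epsilon(1-\alpha)^2]$.

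To recover the exact constant in the lemma, I would invoke the elementary inequality $\cos(\alpha\pi/2)\ge 1-\alpha$ on $[0,1]$: setting $f(\alpha):=\cos(\alpha\pi/2)-(1-\alpha)$, one has $f(0)=f(1)=0$ and $f''(\alpha)=-(\pi/2)^2\cos(\alpha\pi/2)\le 0$, so $f$ is concave on $[0,1]$ and hence nonnegative. This yields $1/\cos^2(\alpha\pi/2)\le 1/(1-\alpha)^2$, and the form displayed in the lemma follows.

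I expect the main technical obstacle to be the rigorous justification of the Plancherel step, since $\omega_\alpha\notin L_1(\R_+)\cup L_2(\R_+)$ for $0<\alpha<1$. I would handle this by first approximating $\phi$ and $\psi$ by smooth, compactly supported functions on $(0,t)$ (for which all Fourier integrals converge absolutely and the branch of $(i\xi)^{-\alpha}$ is unambiguous), applying the identity in that class, and then passing to the $L_2$ limit using the continuity of both sides of the target inequality with respect to the positive-definite seminorm $v\mapsto(\int_0^t\iprod{v,\fInt^\alpha v}\,ds)^{1/2}$. An alternative that avoids the Fourier apparatus is to symmetrize the bilinear form $B(\phi,\psi):=\int_0^t\iprod{\phi,\fInt^\alpha\psi}\,ds$ and separately control the antisymmetric part through the Riesz-potential representation, but this route is longer and gives the same constant.
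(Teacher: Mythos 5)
Your proof is correct and follows essentially the same route as the source the paper cites for this lemma (\cite{MustaphaSchotzau2014}): extension by zero, Plancherel in time with $\operatorname{Re}\widehat{\omega}_\alpha(\xi)=\cos(\alpha\pi/2)|\xi|^{-\alpha}$, a weighted Cauchy--Schwarz step, Young's inequality, and the elementary bound $\cos(\alpha\pi/2)\ge 1-\alpha$. The paper itself states the lemma without proof, so there is nothing further to compare.
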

Unfortunately, the coefficient $\frac{1}{(1-\alpha)^2}$ blows up as $\alpha$ approaches $1^-$, and consequently, the error bounds blowup. { Such a blowup phenomenon, which was highlighted and investigated  recently in \cite{ChenStynes2019},  occurs in the error analysis (but not in numerical experiments \cite{McLeanMustapha2015}) of various numerical methods applied to different time-fractional diffusion models, see for example \cite{JinLiZhou2019, Karaa2018, KaraaMustaphaPani2018, Kopteva2019, LiaoLiZhang2018, McLeanMustapha2015, Mustapha2015, MustaphaAbdallahFursti2014, StynesORiordanGracia2017}.  This blowup behavior appears to be an artifact of the method of proof, see Remark \ref{remark: blows up} where the blowup coefficient is controlled  assuming that  $\sigma>(1-\alpha)/2$. To validate this, some compatibility conditions on the initial data $u_0$ (for example, $u_0 \in H^{2+1/\alpha}(\Omega)$ with $u_0,\A u_0\in H^1_0(\Omega)$) and also on the source term $f$ are needed. Noting that, in the limiting case, $\alpha \to 1^-$,  problem \eqref{eq: ibvp} reduces to the classical  equation \eqref{eq: CD} and our fully-discrete scheme in \eqref{fully 2 2} amounts to the sandard time-stepping Crank-Nicolson (see \eqref{eq: CN}) combined with the (linear) spatial standard continuous  Galerkin method. A straightforwrd analysis leads to an optimal time-space second-order convergence rate \cite{Thomee2006}.}

Finally, in section \ref{sec: numerical results},  a second-order convergence of the $L1$ scheme is confirmed numerically on a typical sample  of test problem. When the time error is dominant, the numerical numbers  in Tables \ref{table 1}--\ref{table 3}  illustrate $O(\tau^{\min\{\gamma(\sigma+\alpha),2\}})$-rates for different choices  of the time-graded mesh exponent $\gamma$ and the fractional exponent  $\alpha$.  These  results  indicate that the condition  $\gamma > \max\{2/(\sigma+\alpha/2),2/(\sigma+3\alpha/2-1/2)\}$ in Theorem \ref{thm: time convergence} is pessimistic. Practically, it is enough to choose $\gamma = 2/(\sigma+\alpha)$ to guarantee an $O(\tau^2)$  accuracy.  Furthermore, the numerical results in Table \ref{table 4} showed $O(h^2)$-rates of convergence in space for different values of $\alpha$ even though  the assumption $\sigma>(1-\alpha)/2$ is not satisfied. 

For later use,   $A(\cdot, \cdot):H_0^1 (\Omega)\times H_0^1(\Omega)\rightarrow\mathbb{R}$ denotes  the bilinear form associated with the elliptic operator $\A$, which is symmetric and positive definite, defined by 
 \begin{equation}\label{eq: bilinear} A(v, w)=\iprod{\kappa_\alpha\nabla v,\nabla w}+\iprod{d\,v,w}.\end{equation}
Throughout the paper,   $C$ is a generic constant which may depend on the parameters $M$, $\sigma$, $T$, $\Omega$,   and  $\gamma$, but is independent of $\tau$ and $h$.

\section{Numerical method}\label{sec: scheme}
This section is devoted to introduce our semi-discrete time-stepping $L1$ numerical scheme for solving the model problem \eqref{eq: ibvp}. We use a time-graded mesh with the following nodes:  
\begin{equation} \label{eq: time mesh} 
 t_i=(i\,\tau)^\gamma,~~~{\rm for}~~ 0\le i\le N,~~{\rm for}~~\gamma\ge 1, ~~{\rm with}~~\tau=T^{1/\gamma}/N,\end{equation}
 where $N$ is the number of subintervals. Denote by~$\tau_n=t_n-t_{n-1}$ the length of the $n$th
subinterval~$I_n=(t_{n-1},t_n)$, for $1\le n\le N$. It is not hard to show  that such a time-graded mesh has the following properties \cite{McLeanMustapha2007}: for $n\ge 2,$ 
\begin{equation}\label{eq: mesh property 1}
t_n\le 2^\gamma t_{n-1},\quad \gamma \tau t_{n-1}^{1-1/\gamma}\le \tau_n\le \gamma \tau t_n^{1-1/\gamma},\quad \tau_n-\tau_{n-1} \le C_\gamma \tau^2 \min(1,t_n^{1-2/\gamma})\,.
\end{equation}

For a given function $v$ defined on the time interval $[0,T],$ let $v^n=v(t_n)$ for $0\le n\le N.$ With this grid function, 
 we associate the backward   difference,
\[
\partial v^n=\frac{v^n-v^{n-1}}{\tau_n}.
\]
To define our time-stepping numerical scheme, integrating problem \eqref{eq: ibvp} over the time interval $I_n$, 
\begin{equation}\label{eq: ibvp int}
\int_{t_{n-1}}^{t_n}u'(t)\,dt +\int_{t_{n-1}}^{t_n}\partial_t^{1-\alpha}\A u(t)\,dt= \int_{t_{n-1}}^{t_n} f(t)\,dt\,.
\end{equation}
Our $L1$ approximate  solution $U$, which is a continuous linear polynomial in the time variable on each closed subinterval $[t_{n-1},t_n],$   is defined by replacing $u$ with $U$ in \eqref{eq: ibvp int}, 
\begin{equation} \label{fully}
 U^n-U^{n-1}+ \int_{t_{n-1}}^{t_n}\partial_t^{1-\alpha}\A U(t)\,dt= \int_{t_{n-1}}^{t_n} f(t)\,dt,~~{\rm for}~~~
 1\le n\le N,
\end{equation}
with $U^0=u_0.$ As $\alpha \to 1^-$, the fractional model problem \eqref{eq: ibvp} amounts to the classical reaction-diffusion equation:
\begin{equation} \label{eq: CD} 
 u'(x,t)
	+\A  u(x,t)=f(x,t),
	\quad\text{for $x\in\Omega$ and $0<t<T$,}
\end{equation}
and the time-stepping $L1$ numerical scheme  \eqref{fully} reduces to 
\begin{equation} \label{eq: CN} U^n-U^{n-1}+\tau_n \A(U^n+U^{n-1})/2= \int_{t_{n-1}}^{t_n} f(t)\,dt,\end{equation}
which is the time-stepping Crank-Nicolson method   for problem \eqref{eq: CD}.  Motivated by this, a  generalized Crank-Nicolson scheme for the fractional reaction-diffusion equation  \eqref{eq: ibvp}, defined by 
\begin{equation} \label{GCN}
 U^n-U^{n-1}+ \int_{t_{n-1}}^{t_n}\partial_t^{1-\alpha}\A \overline U(t)\,dt= \int_{t_{n-1}}^{t_n} f(t)\,dt,~~{\rm with}~~U^0=u_0,
\end{equation}
was developed in \cite{Mustapha2011}, where $\overline U(t)=(U^j+U^{j-1})/2$ for $t\in I_j$. Therein, the theoretical and numerical convergence results confirmed $O(\tau^{1+\alpha})$-rates in time over sufficiently time-graded meshes.    Both schemes \eqref{fully} and \eqref{GCN} are computationally similar, however the theoretical and numerical results show better convergence rates of the   $L1$ scheme.  

For computational purposes,    putting
\[\omega_{nj}=\int_{t_{j-1}}^{t_j}\omega_\alpha(t_n-s)\,ds~~~{\rm and}~~~\widehat \omega_{nj}=\int_{t_{j-1}}^{t_j}\int_s^{t_j}\omega_\alpha(t_n-q)\,dq\,ds,~~{\rm  for}~~j\le n.\]
 Hence \[
\int_{t_{n-1}}^{t_n}\partial_t^{1-\alpha} U(t)\,dt
	=(\fInt^\alpha U)(t_n)-(\fInt^\alpha U)(t_{n-1}),\]
with
\[
{ (\fInt^\alpha U)}(t_n)
	=\sum_{j=1}^n\int_{t_{j-1}}^{t_j}\omega_\alpha(t_n-s)\Big(U^{j-1}+(s-t_{j-1})\partial U^j\Big)\,ds
	=\sum_{j=1}^n(\omega_{nj}U^{j-1}+\widehat \omega_{nj}\partial U^j).
\]
 Then, the numerical scheme in \eqref{fully} is equivalent to 
\begin{multline} \label{fully 2}
 U^n+ \frac{\tau_n^\alpha}{\Gamma(\alpha+2)}\A U^n=U^{n-1}-\frac{\alpha\tau_n^\alpha}{\Gamma(\alpha+2)}\A U^{n-1}\\
-\sum_{j=1}^{n-1}\Big((\omega_{nj}-\omega_{n-1,j})\A U^{j-1}+(\widehat\omega_{nj}-\widehat\omega_{n-1,j})\A \partial  U^j\Big)+\int_{t_{n-1}}^{t_n} f(t)\,dt.
\end{multline}

\section{Error analysis}\label{sec: error analysis}
In this section,  we study the error bounds from the time-stepping  scheme \eqref{fully}.   A preliminary estimate  will be derived in the next lemma.  For convenience, putting 
\begin{equation}\label{eq: eta(t)}
\eta(t)=\eta^n=\frac{1}{\tau_n}\int_{t_{n-1}}^{t_n}\Ba (u-\check u)(s)\,ds,~{\rm for}~ t\in I_n\,,\end{equation}
where the piecewise linear polynomial $\check u$ interpolates $u$ at the time nodes, that is, 
\[\check u(t)=u^{j-1}+(t-t_{j-1})\partial u^j\quad\text{for $t_{j-1}\le t\le t_j$ with $1\le j\le N$.}
\]

\begin{lemma}\label{lem: bound1 of theta} For $1\le n \le N,$ we have 
\[
 \|U^n-u(t_n)\|^2\le  
Ct_n^{1-\alpha}\sum_{j=1}^n \tau_j \|\eta^j\|_1^2\,.\]
\end{lemma}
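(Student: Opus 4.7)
The plan is to introduce $\Theta=U-\check u$, which is continuous and piecewise linear in time with $\Theta(t_n)=U^n-u(t_n)$ and $\Theta(0)=0$, derive its error equation, and then combine an energy estimate in a preconditioned norm with the pointwise inequality of Lemma~\ref{lem: pointwise bound}. Subtracting~\eqref{eq: ibvp int} from~\eqref{fully} on each subinterval $I_m$, using the commutativity of $\Ba$ and $\A$, and recognizing the definition~\eqref{eq: eta(t)} on the right, one obtains
\[
\Theta^m-\Theta^{m-1}+\int_{I_m}\Ba\A\Theta(s)\,ds=\tau_m\A\eta^m,\qquad 1\le m\le N.
\]

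The crucial step is to precondition this identity by $\A^{-1}$ before testing. Since $\A^{-1}$ commutes with $\Ba$, this gives
\[
\A^{-1}(\Theta^m-\Theta^{m-1})+\int_{I_m}\Ba\Theta(s)\,ds=\tau_m\eta^m.
\]
Taking the $L_2$-inner product with $\partial\Theta^m$, summing for $m=1,\dots,n$, using that $\Theta'(t)\equiv\partial\Theta^m$ and $\eta(t)\equiv\eta^m$ on $I_m$, and the identity $\Ba\Theta=\fInt^\alpha\Theta'$ (which follows from $\Theta(0)=0$), yields the energy identity
\[
\int_0^{t_n}\iprod{\A^{-1}\Theta',\Theta'}\,dt+\int_0^{t_n}\iprod{\fInt^\alpha\Theta',\Theta'}\,dt=\int_0^{t_n}\iprod{\eta,\Theta'}\,dt.
\]
Writing $\chi=\A^{-1}\Theta'$ so that $\iprod{\eta,\Theta'}=\iprod{\eta,\A\chi}=A(\eta,\chi)$ and $\iprod{\A^{-1}\Theta',\Theta'}=A(\chi,\chi)$, Cauchy--Schwarz and Young's inequality give $A(\eta,\chi)\le\tfrac12 A(\eta,\eta)+\tfrac12 A(\chi,\chi)$. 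Absorbing the last piece into the first term on the left and using $A(\eta,\eta)\le C\|\eta\|_1^2$ leaves
\[
\int_0^{t_n}\iprod{\fInt^\alpha\Theta',\Theta'}\,dt\le C\sum_{j=1}^n\tau_j\|\eta^j\|_1^2.
\]

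The proof is concluded by applying Lemma~\ref{lem: pointwise bound} to $\Theta$ — its hypotheses $\Theta(0)=0$ and $\fInt^\alpha\Theta'(0)=0$ are immediate for our piecewise linear $\Theta$ — giving
\[
\|\Theta(t_n)\|^2\le 2\omega_{2-\alpha}(t_n)\int_0^{t_n}\iprod{\fInt^\alpha\Theta',\Theta'}\,ds\le Ct_n^{1-\alpha}\sum_{j=1}^n\tau_j\|\eta^j\|_1^2,
\]
which is the stated bound since $\Theta(t_n)=U^n-u(t_n)$. The main obstacle is handling the right-hand side of the energy identity: a plain test of the original error equation with $\Theta'$ would leave $\int A(\eta,\Theta')\,dt$ on the right, which cannot be absorbed because no $H^1$-in-space control of $\Theta'$ is at hand. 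Preconditioning by $\A^{-1}$ is the key manoeuvre — it transfers one order of spatial regularity from the test function onto the data $\eta$, after which the coercivity quantity $\iprod{\A^{-1}\Theta',\Theta'}$ naturally arising on the left provides exactly the margin needed to close the inequality.
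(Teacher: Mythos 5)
Your proof is correct, and it shares the paper's overall skeleton: the same splitting $\Theta=U-\check u$, the same per-interval error equation with $\tau_j\A\eta^j$ on the right, the same reduction to a bound for $\int_0^{t_n}\iprod{\Theta',\fInt^\alpha\Theta'}\,dt$, and the same final appeal to Lemma~\ref{lem: pointwise bound}. Where you genuinely diverge is the energy step in the middle. The paper never inverts $\A$: it tests the un-preconditioned equation on each $I_j$ with $v_j:=\int_{t_{j-1}}^{t_j}\fInt^\alpha\Theta'\,dt$, so that the elliptic term yields the coercive quantity $A(v_j,v_j)\ge\beta\|v_j\|_1^2$, the time-difference term yields the wanted quantity $\tau_j\int_{I_j}\iprod{\Theta',\fInt^\alpha\Theta'}\,dt$, and the right-hand side $\tau_j A(\eta^j,v_j)$ is absorbed into $A(v_j,v_j)$ by Cauchy--Schwarz. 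You instead precondition by $\A^{-1}$ and test with $\partial\Theta^j$, which swaps the roles: the time-difference term supplies the absorber $\iprod{\A^{-1}\Theta',\Theta'}=A(\chi,\chi)$ and the fractional term supplies the quantity you keep. Both routes are local in time, avoid any weakly singular Gronwall inequality, and arrive at the identical estimate $\int_0^{t_n}\iprod{\Theta',\fInt^\alpha\Theta'}\,dt\le C\sum_{j=1}^n\tau_j\|\eta^j\|_1^2$. Your version makes the duality transparent and needs only $A(\eta,\eta)\le C\|\eta\|_1^2$; its small extra cost is the invocation of $\A^{-1}$ and of the Green identity $\iprod{\eta^j,\A\chi}=A(\eta^j,\chi)$, which tacitly uses $\eta^j\in H^1_0(\Omega)$ (true here, since $u$ and $\check u$ vanish on $\partial\Omega$, but worth saying). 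Your closing remark that a naive test with $\Theta'$ leaves an unabsorbable $\int A(\eta,\Theta')\,dt$ is accurate; the paper's answer to that obstacle is a cleverer test function rather than a preconditioner.
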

\begin{proof} From equations  \eqref{eq: ibvp int} and \eqref{fully}, 
\[(U^j-u(t_j))-(U^{j-1}-u(t_{j-1}))+\int_{t_{j-1}}^{t_j} \A\Ba
(U-\check u)(t)\,dt
    =\tau_j \A\eta^j\,.
\]
Taking the inner product with $v:= \int_{t_{j-1}}^{t_j}\partial_t^{1-\alpha}(U-\check u)dt=
\int_{t_{j-1}}^{t_j}\fInt^\alpha (U-\check u)'\,dt$ (because  $U^0-\check u(0)=U^0-u_0=0$), and using 
\[A\Big(\int_{t_{j-1}}^{t_j}\fInt^\alpha (U-\check u)'\,dt,\int_{t_{j-1}}^{t_j}\fInt^\alpha(U-\check u)'\,dt\Big)\ge \beta \Big\|\int_{t_{j-1}}^{t_j}\fInt^\alpha(U-\check u)'\,dt\Big\|_1^2,\]
for some positive constant $\beta$ depends on $\Omega$ (due to the Poincar\'e inequality),    
\begin{multline*}
\tau_j\int_{t_{j-1}}^{t_j} \iprod{(U-\check u)',\fInt^\alpha(U-\check u)'}\,dt+
\beta\Big\|\int_{t_{j-1}}^{t_j}\fInt^\alpha(U-\check u)'\,dt\Big\|_1^2\\
{ \le} \tau_j\bigiprod{\A\eta^j,\int_{t_{j-1}}^{t_j}\fInt^\alpha(U-\check u)'\,dt}\,.\end{multline*}
An application of the Cauchy-Schwarz inequality leads to  
\[ 
\tau_j\bigiprod{\A\eta^j,\int_{t_{j-1}}^{t_j}\fInt^\alpha(U-\check u)'\,dt}
 { \le}
\frac{1}{2\beta}\tau_j^2\|\eta^j\|_1^2+
\frac{\beta}{2}\Big\|\int_{t_{j-1}}^{t_j}\fInt^\alpha(U-\check u)'\,dt\Big\|_1^2,\]
and consequently, 
\begin{equation} \label{eq: theta n0}
\int_{t_{j-1}}^{t_j}\iprod{(U-\check u)',\fInt^\alpha(U-\check u)'}\,dt{ \le}
C\tau_j\|\eta^j\|_1^2.\end{equation}
Summing over the variable $j$,
\[\int_0^{t_n}\iprod{(U-\check u)',\fInt^\alpha(U-\check u)'}\,dt{ \le}
C\sum_{j=1}^n \tau_j\|\eta^j\|_1^2.\]
Hence, using   Lemma \ref{lem: pointwise bound}, 
the desired bound is obtained.
\end{proof}

The current task is to    estimate  the candidate  $\|\eta^j\|_1$  which is very delicate. The approach is novel and some new ideas are used. Starting from the fact that  $(u-\check u)(0)=0$ (since $\check u$ interpolates $u$ at the time nodes $t_n$ for $0\le n\le N$), we observe 
\[\Ba (u-\check u)(t)=\fInt^\alpha (u-\check u)'(t)
=\sum_{i=1}^j \int_{t_{i-1}}^{\min\{t_i,t\}}\omega_\alpha(t-s)\Big(u'(s)-{ \partial u^i}\Big)\,ds,~~{\rm for}~t\in I_j.\]
However, 
\[
u(t_i)-u(t_{i-1})=\tau_i u'(t_i)-\frac{\tau_i^2}{2}u''(t_i)+\frac{1}{2}\int_{t_{i-1}}^{t_i}(q-t_{i-1})^2u'''(q)\,dq,\]
and hence, after some manipulations, one can show that
\begin{equation*}
\begin{aligned}
u'(s)-\partial u^i &=e_1(s)+e_2(s)\quad {\rm for}
~~s\in I_i,
\end{aligned}
\end{equation*}
where
\begin{align*}
e_1(s)
&=\int_s^{t_i}(q-s)u'''(q)\,dq-\frac{1}{2\tau_i}\int_{t_{i-1}}^{t_i}(q-t_{i-1})^2u'''(q)\,dq,\\
 e_2(s) &= \big(s-t_{i-1/2}\big)u''(t_i).
\end{align*}
Thus, splitting $\eta^j$ as: $\eta^j=\tau_j^{-1}\Big(\eta_1^j+\eta_2^j\Big)$, where 
 \begin{equation}\label{eq: eta21 eta22}
\eta_1^j=\int_{t_{j-1}}^{t_j} \fInt^\alpha e_1(t)\,dt\quad{\rm and}\quad
\eta_2^j=\int_{t_{j-1}}^{t_j}\fInt^\alpha e_2(t)\,dt.
\end{equation}
Estimating  $\|\eta_1^j\|_1$  will be the topic of the  next lemma. 
\begin{lemma}\label{lemma: bound of eta1}
For $1\le j\le N,$  we have
\[
\| \eta_1^j\|_1
    \le C\tau_j\tau^2 t_j^{3\alpha/2+\sigma-1-2/\gamma},\quad {\rm with}~~\gamma>2/(\sigma+\alpha/2).\]
\end{lemma}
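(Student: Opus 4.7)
My plan is to reduce $\eta_1^j$ to an integral of $e_1$ against a single-variable kernel via Fubini, bound $\|e_1\|_1$ subinterval-by-subinterval using the regularity \eqref{eq: regularity}, and then recognise the resulting sum over mesh cells as a Riemann-sum approximation of a Beta-function integral.

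First I swap the order of integration in the definition \eqref{eq: eta21 eta22} to obtain
$$\eta_1^j = \int_0^{t_j} e_1(s)\,K_j(s)\,ds, \qquad K_j(s) = \int_{\max\{s,t_{j-1}\}}^{t_j}\omega_\alpha(t-s)\,dt,$$
so that $\|\eta_1^j\|_1 \le \int_0^{t_j}\|e_1(s)\|_1 K_j(s)\,ds$. From the defining formula for $e_1$, the regularity bound $\|u'''(q)\|_1 \le M q^{\sigma-3+\alpha/2}$ yields, for $s \in I_i$ with $i \ge 2$, the pointwise estimate $\|e_1(s)\|_1 \le C\tau_i^2 t_i^{\sigma-3+\alpha/2}$. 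Combined with $\tau_i \le \gamma\tau t_i^{1-1/\gamma}$ from \eqref{eq: mesh property 1}, this becomes
$$\|e_1(s)\|_1 \le C\tau^2\,t_i^{\sigma-1+\alpha/2-2/\gamma}.$$
On $I_1$ the pointwise bound is not uniformly available, so I would instead estimate $\int_{I_1}\|e_1(s)\|_1\,ds$ directly, exploiting $\tau_1 = t_1 = \tau^\gamma$ together with the hypothesis $\gamma(\sigma+\alpha/2)>2$ to absorb the singularity of $u'''$ at the origin.

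For the kernel, monotonicity of $\omega_\alpha$ gives $K_j(s) \le \tau_j\,\omega_\alpha(t_{j-1}-s)$ whenever $s < t_{j-1}$, while on $I_j$ itself $K_j(s) = (t_j-s)^\alpha/\Gamma(\alpha+1)$. The contribution from $i = j$ is computed directly and produces $C\tau_j^{1+\alpha}\tau^2 t_j^{\sigma-1+\alpha/2-2/\gamma}$, which I can put into the desired form by $\tau_j^\alpha \le C t_j^\alpha$. The contributions from $I_1,\dots,I_{j-1}$ combine to
$$C\tau_j\tau^2 \sum_{i=1}^{j-1}\int_{I_i}\omega_\alpha(t_{j-1}-s)\,s^{\sigma-1+\alpha/2-2/\gamma}\,ds \;\le\; C\tau_j\tau^2 \int_0^{t_j}(t_j-s)^{\alpha-1}\,s^{\sigma-1+\alpha/2-2/\gamma}\,ds,$$
and the last integral is a Beta integral equal to $C\,t_j^{3\alpha/2+\sigma-1-2/\gamma}$, convergent precisely under the assumed condition $\gamma > 2/(\sigma+\alpha/2)$.

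The main obstacle will be the careful bookkeeping on the boundary subintervals. On $I_1$ the regularity bound yields only an integrable singularity rather than a uniform pointwise bound, so one must verify that the resulting contribution respects the targeted exponent after invoking $t_1=\tau^\gamma$. On $I_{j-1}$ the kernel estimate $K_j(s) \le \tau_j\omega_\alpha(t_{j-1}-s)$ is integrable but blows up as $s\to t_{j-1}^-$, so a direct integration of $K_j$ over that cell (using its representation as a difference of two $\alpha$-powers) is preferable to the envelope bound. Finally, passing from the discrete sum to the Beta integral needs a routine comparison based on the equivalence $t_i \asymp t_{i-1}$ for $i \ge 2$, which is guaranteed by $t_n \le 2^\gamma t_{n-1}$ in \eqref{eq: mesh property 1}.
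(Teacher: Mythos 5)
Your proposal is correct and follows essentially the same route as the paper: the Fubini step merely reorganizes the paper's iterated integral, while the cell-wise bound $\|e_1(s)\|_1\le C\tau_i^2 t_i^{\sigma+\alpha/2-3}\le C\tau^2 s^{\sigma+\alpha/2-1-2/\gamma}$ for $i\ge 2$, the separate treatment of the first subinterval (where the paper exploits $t-s\ge (j/2)^\gamma(t_1-s)$ together with $t_1=\tau^\gamma$ and $\gamma\ge 2/(\sigma+\alpha/2)$, just as you propose to do via $\tau_1=t_1=\tau^\gamma$), and the concluding Beta-integral evaluation are all identical in substance. Only take care that your displayed sum should run over $i\ge 2$, since the pointwise bound $C\tau^2 s^{\sigma+\alpha/2-1-2/\gamma}$ for $\|e_1(s)\|_1$ is not available on $I_1$ — precisely the bookkeeping issue you already flag.
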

\begin{proof}
Expanding  $\eta_1^j$ as
\[\eta_1^j=\sum_{i=1}^j\int_{t_{j-1}}^{t_j}\int_{t_{i-1}}^{\min\{t_i,t\}}\omega_\alpha(t-s)\,e_1 (s)\,ds\,dt.\]
 From  the definition of $e_1$ and the regularity assumption \eqref{eq: regularity}, for $s\in I_1,$
\begin{equation}\label{eq: bound of Ae1}
\begin{aligned}\|e_1(s)\|_1&\le \int_s^{t_1}(q-s)\|u'''(q)\|_1\,dq+\frac{1}{2t_1}\int_0^{t_1}q^2\|u'''(q)\|_1\,dq\\
&\le M\int_s^{t_1}q\,q^{\sigma+\alpha/2-3}\,dq+\frac{M}{2t_1}\int_0^{t_1}q^2q^{\sigma+\alpha/2-3}\,dq\\
&\le 
C\,\max\{s^{\sigma+\alpha/2-1},t_1^{\sigma+\alpha/2-1}\}\,.
\end{aligned}
\end{equation}
Hence, for $j=1,$   
\begin{equation}\label{eq: bound on I1}
\begin{aligned}
\|\eta_1^1&\|_1\le C\int_0^{t_1}\int_0^t\omega_\alpha(t-s)\max\{s^{\sigma+\alpha/2-1},t_1^{\sigma+\alpha/2-1}\}\,ds\,dt\\
&=  C\int_0^{t_1}\max\Big\{{ \frac{\Gamma(\sigma+\alpha/2)}{\Gamma(\sigma+3\alpha/2)}}t^{\sigma+3\alpha/2-1},{ \omega_{\alpha+1}(t)} t_1^{\sigma+\alpha/2-1}\Big\}\,dt\\
&= 
 C\max\Big\{{\frac{\Gamma(\sigma+\alpha/2)}{\Gamma(\sigma+3\alpha/2+1)}}t_1^{\sigma+3\alpha/2},
 { \omega_{\alpha+2}(t_1)} t_1^{\sigma+\alpha/2-1}\Big\}\le C\tau_1^{3\alpha/2+\sigma}\,.
 \end{aligned}
 \end{equation}
For the case  $j \ge 2,$ noting first that
\[
\|\eta_1^j\|_1\le \int_{t_{j-1}}^{t_j}\Big(\int_0^{t_1}
    \omega_\alpha(t-s)\,\|e_1 (s)\|_1\,ds
    +\sum_{i=2}^j\int_{t_{i-1}}^{\min\{t_i,t\}}
    \omega_\alpha(t-s)\,\|e_1 (s)\|_1\,ds\Big)dt.\]
For estimating  the first term on the right-hand side in the above inequality, using 
\[t-s\ge t_{j-1}-s=(j-1)^\gamma\Big(t_1-\frac{s}{(j-1)^\gamma}\Big)\ge  (j-1)^\gamma\Big(t_1-s\Big)\ge  (j/2)^\gamma\Big(t_1-s\Big),\]
and the achieved bound in \eqref{eq: bound of Ae1},
\begin{multline*}
\int_0^{t_1}
    \omega_\alpha(t-s)\,\|e_1 (s)\|_1\,ds\le Cj^{\gamma(\alpha-1)}\int_0^{t_1}(t_1-s)^{\alpha-1}\max\{s^{\sigma+\alpha/2-1},t_1^{\sigma+\alpha/2-1}\}\,ds\\
    = Ct_j^{\alpha-1}\tau^{\gamma(1-\alpha)} \int_0^{t_1}(t_1-s)^{\alpha-1}\max\{s^{\sigma+\alpha/2-1},t_1^{\sigma+\alpha/2-1}\}\,ds\\
    \le  C\,t_j^{\alpha-1}\tau^{\gamma(1-\alpha)}t_1^{3\alpha/2+\sigma-1}=  C\,\tau^2 t_j^{\alpha-1}t_1^{\alpha/2+\sigma-2/\gamma}\\
     \le C\,\tau^2 t_j^{3\alpha/2+\sigma-2/\gamma-1},\quad {\rm for}~~\gamma\ge 2/(\sigma+\alpha/2).\end{multline*}    
    On the other hand, for $s \in I_i$ with $i\ge 2,$  from  the definition of $e_1$, the regularity assumption \eqref{eq: regularity}, and the time mesh property \eqref{eq: mesh property 1},   we have 
\begin{multline*}
\|e_1(s)\|_1\le
    C\tau_i\int_{t_{i-1}}^{t_i}\|u'''(q)\|_1\,dq
    \le
    C\tau_i\int_{t_{i-1}}^{t_i}q^{\sigma+\alpha/2-3}\,dq\\
    \le  C\tau_i^2t_i^{\sigma+\alpha/2-3}\le C\tau^2t_i^{\sigma+\alpha/2-1-2/\gamma}\le C\tau^2s^{\sigma+\alpha/2-1-2/\gamma}\,.
\end{multline*}
 Thus,   \begin{align*}
\sum_{i=2}^j\int_{t_{i-1}}^{\min\{t_i,t\}}
    \omega_\alpha(t-s)\,\|e_1 (s)\|_1\,ds&\le C\tau^2\int_{t_1}^{t}(t-s)^{\alpha-1}s^{\sigma+\alpha/2-1-2/\gamma}\,ds\\
    &\le  C\tau^2\int_0^{t}(t-s)^{\alpha-1}s^{\sigma+\alpha/2-1-2/\gamma}\,ds\\
    &\le C\tau^2t^{3\alpha/2+\sigma-1-2/\gamma},\quad {\rm for}~~\gamma>2/(\sigma+\alpha/2).\end{align*}
 Gathering the above contribution and using again the time mesh property \eqref{eq: mesh property 1},
\begin{align*}
\| \eta_1^j\|_1
    \le C\tau_1^{3\alpha/2+\sigma}+&C\tau_j\tau^2 t_j^{3\alpha/2+\sigma-1-2/\gamma},\quad {\rm for}~~j\ge 2~~{\rm with}~~\gamma>2/(\sigma+\alpha/2).\end{align*}
From this bound, and the achieved estimate in \eqref{eq: bound on I1}, the desired result is obtained. 
\end{proof}

It remains to estimate $\|\eta_2^j\|_1$. The technical result in the next lemma is needed.
\begin{lemma}\label{lem: bound of Kij}
For $\gamma \ge 1,$ and  for a given positive sequence $\{a_i\}$, we have 
\[\sum_{i=2}^ja_i\Big|\K^{i-1,j-1}
    -\frac{\tau_{i-1}^3}{\tau_i^3}\K^{i,j}\Big|   \le C\tau\,  t_{j-1}^{\alpha-1/\gamma}\max_{i=2}^j \Big(a_i \tau_{i-1}^2\Big),\quad {\rm for}~~2\le i\le j,\]
    with \[
\K^{i,j}:=   
     \frac{1}{2}\int_{t_{i-1}}^{t_i}(s-t_{i-1})(t_i-s)
\omega_\alpha(t_j-s)\,ds,\quad{\rm for}~~~1\le i\le j\le N.\]
    \end{lemma}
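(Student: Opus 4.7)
The plan is to recognize $\K^{i,j}$ as the error of the trapezoidal rule and reduce the whole estimate to a Riemann-sum approximation of a beta-type integral. I would start by integrating by parts twice in the definition of $\K^{i,j}$ using $\omega_{\alpha+2}'=\omega_{\alpha+1}$ and $\omega_{\alpha+3}'=\omega_{\alpha+2}$, obtaining
\[\K^{i,j}=\tfrac{\tau_i}{2}\bigl[\omega_{\alpha+2}(t_j-t_{i-1})+\omega_{\alpha+2}(t_j-t_i)\bigr]-\bigl[\omega_{\alpha+3}(t_j-t_{i-1})-\omega_{\alpha+3}(t_j-t_i)\bigr].\]
This is exactly the trapezoidal error for $\int_{t_{i-1}}^{t_i}\omega_{\alpha+2}(t_j-s)\,ds$, and since the integrand has second $s$-derivative $\omega_\alpha(t_j-s)>0$, the mean-value form of the trapezoidal error yields
\[\K^{i,j}=\frac{\tau_i^3}{12}\,\omega_\alpha(t_j-\xi_{i,j}),\qquad \xi_{i,j}\in[t_{i-1},t_i].\]

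Using $a_i\le M/\tau_{i-1}^2$ with $M:=\max_{2\le i\le j}a_i\tau_{i-1}^2$, the claim reduces to
\[\sum_{i=2}^{j}\tau_{i-1}\bigl|\omega_\alpha(t_{j-1}-\xi_{i-1,j-1})-\omega_\alpha(t_j-\xi_{i,j})\bigr|\le C\tau\,t_{j-1}^{\alpha-1/\gamma}.\]
I would treat $i=j$ separately: a direct computation gives $\K^{n,n}=\alpha\tau_n^{\alpha+2}/(2\Gamma(\alpha+3))$, so that
$|\K^{j-1,j-1}-(\tau_{j-1}^3/\tau_j^3)\K^{j,j}|=C\tau_{j-1}^3|\tau_{j-1}^{\alpha-1}-\tau_j^{\alpha-1}|\le C\tau_{j-1}^{\alpha+1}(\tau_j-\tau_{j-1})$, and then $\tau_j-\tau_{j-1}\le C\tau^2 t_{j-1}^{1-2/\gamma}$ together with $\tau_{j-1}\ge C\tau t_{j-1}^{1-1/\gamma}$ and $t_{j-1}^{1/\gamma}\ge\tau$ (for $j\ge 2$) deliver the required bound. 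For $i<j$, I apply MVT to $\omega_\alpha$,
\[|\omega_\alpha(t_{j-1}-\xi_{i-1,j-1})-\omega_\alpha(t_j-\xi_{i,j})|\le C(t_{j-1}-t_{i-1})^{\alpha-2}\bigl|\tau_j-(\xi_{i,j}-\xi_{i-1,j-1})\bigr|,\]
and bound $|\tau_j-(\xi_{i,j}-\xi_{i-1,j-1})|\le|\tau_j-\tau_{i-1}|+C\tau_i$ using $\xi_{i,j}-\xi_{i-1,j-1}=\tau_{i-1}+O(\tau_i+\tau_{i-1})$.

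The crucial mesh estimate is a telescoping refinement of $\tau_j-\tau_{i-1}=\sum_{k=i}^j(\tau_k-\tau_{k-1})\le C\tau^2\sum_{k=i}^jt_k^{1-2/\gamma}$, which, combined with the concavity bound $t_j^{1/\gamma}-t_{i-1}^{1/\gamma}\le C t_{i-1}^{1/\gamma-1}(t_j-t_{i-1})$ for $\gamma\ge 1$, yields
\[|\tau_j-\tau_{i-1}|\le C\tau\,(t_j-t_{i-1})\,t_j^{-1/\gamma}\]
up to a lower-order term. Substituting this and comparing the sum to the integral $\int_0^{t_j}(t_j-s)^{\alpha-1}s^{-1/\gamma}\,ds\le C\,t_j^{\alpha-1/\gamma}$ (via the substitution $u=s/t_j$ leading to a convergent beta integral) gives the required bound $C\tau t_{j-1}^{\alpha-1/\gamma}$, after using $t_j\le 2^\gamma t_{j-1}$. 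The main obstacle I expect is the fourth step: the $\xi_{i,j}$'s are only known to lie in abutting intervals, and the MVT factor $(t_{j-1}-t_{i-1})^{\alpha-2}$ becomes singular as $i\to j-1$; the telescoping/concavity estimate above must absorb enough of this singularity so that the weighted sum genuinely behaves like the smooth beta integral, rather than blowing up term-by-term. Managing the regime $i\ll j$ simultaneously with $i$ near $j$, and ensuring the bound is uniform in $\gamma\ge 1$ (including the critical cases $\gamma=1$ where the difference is identically zero and $\gamma=2$ where $t_k^{1-2/\gamma}\equiv 1$), will require the split outlined above.
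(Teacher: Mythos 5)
Your reduction of $\K^{i,j}$ to the trapezoidal error, the resulting identity $\K^{i,j}=\frac{\tau_i^3}{12}\omega_\alpha(t_j-\xi_{i,j})$ with $\xi_{i,j}\in(t_{i-1},t_i)$, and your treatment of the diagonal term $i=j$ are all correct. The gap is in the off-diagonal step, and it is exactly the obstacle you flag at the end: the slack $\xi_{i,j}-\xi_{i-1,j-1}=\tau_{i-1}+O(\tau_i)$ is fatal. After the mean value theorem you are left with the contribution
\[
\sum_{i=2}^{j-1}\tau_{i-1}\,(t_{j-1}-t_{i-1})^{\alpha-2}\,\tau_i ,
\]
whose $i=j-1$ term alone equals $\tau_{j-2}\,\tau_{j-1}^{\alpha-1}\approx\tau_{j-1}^{\alpha}\approx(\gamma\tau t_{j-1}^{1-1/\gamma})^{\alpha}$; dividing by the target $\tau t_{j-1}^{\alpha-1/\gamma}$ gives $(t_{j-1}^{1/\gamma}/\tau)^{1-\alpha}=(j-1)^{1-\alpha}$, which is unbounded as $j$ grows. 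This piece is not absorbed by your telescoping/concavity estimate, which only controls the $|\tau_j-\tau_{i-1}|$ part; the argument cannot close unless you locate the mean-value points to accuracy $o(\tau_i)$, which the mean-value form of the trapezoidal error does not provide.

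The paper sidesteps this by never linearizing $\omega_\alpha$ pointwise. It maps $\K^{i-1,j-1}$ onto $I_i$ by the affine substitution $s=\tau_i^{-1}\bigl((t_i-q)t_{i-2}+(q-t_{i-1})t_{i-1}\bigr)$, so that
\[
\K^{i-1,j-1}-\frac{\tau_{i-1}^3}{\tau_i^3}\K^{i,j}
=\frac{\tau_{i-1}^3}{2\tau_i^3}\int_{t_{i-1}}^{t_i}(q-t_{i-1})(t_i-q)\bigl[\omega_\alpha(t_{j-1}-s)-\omega_\alpha(t_j-q)\bigr]\,dq\ \ge 0,
\]
the sign coming from $t_{j-1}-s\le t_j-q$. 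Bounding the polynomial weight by $\tau_i^2/4$ and undoing the substitution in the first kernel, the sum over $i$ of $\int_{t_{i-1}}^{t_i}\omega_\alpha(t_j-q)\,dq$ telescopes to $\omega_{\alpha+1}(t_j-t_1)\ge\omega_{\alpha+1}(t_{j-1})=\sum_i\int_{t_{i-2}}^{t_{i-1}}\omega_\alpha(t_{j-1}-v)\,dv$, leaving only $\sum_i(\tau_i/\tau_{i-1}-1)\int_{t_{i-2}}^{t_{i-1}}\omega_\alpha(t_{j-1}-v)\,dv$. There the near-diagonal term carries the genuinely small factor $\tau_i/\tau_{i-1}-1=O(\tau t_{i-1}^{-1/\gamma})$ against the integrated, finite quantity $\omega_{\alpha+1}(\tau_{i-1})$ --- precisely the smallness your pointwise MVT bound destroys. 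If you want to keep the trapezoidal-error viewpoint, replace the mean-value form by the Peano-kernel form and run the same pairing and telescoping on the kernels; but that essentially reproduces the paper's proof.
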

\begin{proof}  For $\gamma =1,$ $\K^{i-1,j-1}
    -\frac{\tau_{i-1}^3}{\tau_i^3}\K^{i,j}=0$, and so, we have nothing to show. For $\gamma>1,$ from the  substitution $s=\tau_i^{-1}\Big((t_i-q)t_{i-2}+(q-t_{i-1})t_{i-1}\Big),$
we observe  \begin{equation}\K^{i-1,j-1}=\frac{1}{2}\frac{\tau_{i-1}^3}{\tau_i^3}\int_{t_{i-1}}^{t_i}
    (q-t_{i-1})(t_{i}-q)\omega_\alpha(t_{j-1}-s)\,dq.\label{eq:
    ki-1j-1}
\end{equation}
 Since  $t_{j-1}-s\le t_j-q$, $\K^{i-1,j-1}\ge \frac{\tau_{i-1}^3}{\tau_i^3}\K^{i,j}. $ This leads to  
 \begin{align*}&\sum_{i=2}^ja_i\Big|\K^{i-1,j-1}
    -\frac{\tau_{i-1}^3}{\tau_i^3}\K^{i,j}\Big|\\&= \frac{1}{2}\sum_{i=2}^ja_i\frac{\tau_{i-1}^3}{\tau_i^3}\int_{t_{i-1}}^{t_i}
   (q-t_{i-1})(t_{i}-q)[\omega_\alpha(t_{j-1}-s)-\omega_\alpha(t_j-q)]\,dq\\
   &\le \frac{1}{8}\max_{i=2}^j \Big(a_i \frac{\tau_{i-1}^3}{\tau_i}\Big) \sum_{i=2}^j\int_{t_{i-1}}^{t_i}[\omega_\alpha(t_{j-1}-s)-\omega_\alpha(t_j-q)]\,dq\\
   &\le\frac{1}{8}\max_{i=2}^j \Big(a_i \tau_{i-1}^2\Big)\sum_{i=2}^j\biggl(\frac{\tau_i}{\tau_{i-1}}\int_{t_{i-2}}^{t_{i-1}}
    \omega_\alpha(t_{j-1}-v)\,dv-\int_{t_{i-1}}^{t_i}\omega_\alpha(t_j-q)\,dq\biggr)\,.\end{align*}
   A simple manipulation shows that 
 \begin{multline*} \sum_{i=2}^j\int_{t_{i-1}}^{t_i}\omega_\alpha(t_j-q)\,dq=\int_{t_1}^{t_j}\omega_\alpha(t_j-q)\,dq\\=\omega_{\alpha+1}(t_j-t_1)
 \ge \omega_{\alpha+1}(t_{j-1})=\sum_{i=2}^j\int_{t_{i-2}}^{t_{i-1}}
    \omega_\alpha(t_{j-1}-v)\,dv, \end{multline*}
    and consequently, 
     \begin{multline*}\sum_{i=2}^ja_i\Big|\K^{i-1,j-1}
    -\frac{\tau_{i-1}^3}{\tau_i^3}\K^{i,j}\Big|\\
        \le \frac{1}{8}\max_{i=2}^j \Big(a_i \tau_{i-1}^2\Big)\sum_{i=2}^j\Big(\frac{\tau_i}{\tau_{i-1}}-1\Big)\int_{t_{i-2}}^{t_{i-1}}
    \omega_\alpha(t_{j-1}-q)\,dq.\end{multline*}
By the mesh properties in \eqref{eq: mesh property 1},  
  \[\frac{\tau_i}{\tau_{i-1}}-1= (\tau_i-\tau_{i-1})\tau_{i-1}^{-1}\le C\tau^2t_i^{1-2/\gamma}\tau^{-1} t_{i-1}^{1/\gamma-1}\le C\tau t_{i-1}^{-1/\gamma},~~{\rm for}~~i\ge 2,\] 
  and therefore, using $\gamma>1,$ 
  \begin{align*}
\sum_{i=2}^j\Big(\frac{\tau_i}{\tau_{i-1}}-1\Big)\int_{t_{i-2}}^{t_{i-1}}
    \omega_\alpha(t_{j-1}-q)\,dq &\le C\tau \sum_{i=2}^jt_{i-1}^{-1/\gamma}\int_{t_{i-2}}^{t_{i-1}}    \omega_\alpha(t_{j-1}-q)\,dq\\
&   \le C\tau  \int_0^{t_{j-1}}  q^{-1/\gamma}  \omega_\alpha(t_{j-1}-q)\,dq
   \le C\tau t_{j-1}^{\alpha-1/\gamma}\,.    
    \end{align*}
This completes the proof.
\end{proof}

Now, we are ready to bound $\|\eta_2^j\|_1$. 
\begin{lemma}\label{lemma: bound of eta2}
For $\eta_2^j$ defined as in~\eqref{eq: eta21 eta22} with $j\ge 1,$ we have
 \[\|\eta^j_2\|_1\le  C\tau^2 \tau_j  t_j^{3\alpha/2+\sigma-1-2/\gamma},\quad{\rm for}~~\gamma \ge 2/(\sigma+\alpha/2).\]
\end{lemma}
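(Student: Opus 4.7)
}

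The plan is to reshape $\eta_2^j$ into a sum that the technical Lemma \ref{lem: bound of Kij} was designed to bound. Starting from the definition of $e_2$ on $I_i$, I would first exchange the order of the outer $dt$-integration over $I_j$ with the inner $ds$-integration, splitting into cases $i<j$ (so $\min\{t_i,t\}=t_i$, giving the weight $\omega_{\alpha+1}(t_j-s)-\omega_{\alpha+1}(t_{j-1}-s)$) and $i=j$ (giving the weight $\omega_{\alpha+1}(t_j-s)$). The crucial structural observation is that $s-t_{i-1/2}=P_i'(s)$ for the quadratic $P_i(s)=-\tfrac12(s-t_{i-1})(t_i-s)$, which vanishes at both endpoints of $I_i$. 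Integration by parts therefore kills the boundary contributions and yields, after a short calculation,
\[
\eta_2^j \;=\; \sum_{i=1}^{j-1}u''(t_i)\bigl(\K^{i,j-1}-\K^{i,j}\bigr)\;-\;u''(t_j)\K^{j,j}.
\]

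Next I would reindex the first sum ($i\mapsto i-1$) and consolidate to obtain a single telescoping sum:
\[
\eta_2^j \;=\; -u''(t_1)\K^{1,j}+\sum_{i=2}^j\bigl[u''(t_{i-1})\K^{i-1,j-1}-u''(t_i)\K^{i,j}\bigr].
\]
Inside each summand I would insert the intermediate quantity $\tfrac{\tau_{i-1}^3}{\tau_i^3}\K^{i,j}$, splitting each term as
\[
u''(t_{i-1})\Bigl[\K^{i-1,j-1}-\tfrac{\tau_{i-1}^3}{\tau_i^3}\K^{i,j}\Bigr]\;+\;\Bigl[\tfrac{\tau_{i-1}^3}{\tau_i^3}u''(t_{i-1})-u''(t_i)\Bigr]\K^{i,j}.
\]
The first piece is tailor-made for Lemma \ref{lem: bound of Kij} with $a_i=\|u''(t_{i-1})\|_1$; using $\|u''(t)\|_1\le Mt^{\sigma+\alpha/2-2}$ and $\tau_{i-1}\le\gamma\tau t_{i-1}^{1-1/\gamma}$ from \eqref{eq: mesh property 1} gives $a_i\tau_{i-1}^2\le C\tau^2 t_{i-1}^{\sigma+\alpha/2-2/\gamma}$, where the exponent is nonnegative under the hypothesis $\gamma\ge 2/(\sigma+\alpha/2)$, so the maximum is attained at $i=j$ and the resulting bound is of the target order $C\tau^3 t_j^{\sigma+3\alpha/2-3/\gamma}\le C\tau^2\tau_j t_j^{\sigma+3\alpha/2-1-2/\gamma}$.

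For the second piece I would estimate the coefficient by writing $\tfrac{\tau_{i-1}^3}{\tau_i^3}u''(t_{i-1})-u''(t_i)=-\int_{t_{i-1}}^{t_i}u'''(s)\,ds+\bigl[\tfrac{\tau_{i-1}^3}{\tau_i^3}-1\bigr]u''(t_{i-1})$ and using $|1-(\tau_{i-1}/\tau_i)^3|\le 3(\tau_i-\tau_{i-1})/\tau_i\le C\tau t_i^{-1/\gamma}$ together with the regularity \eqref{eq: regularity}, while bounding $\K^{i,j}\le\tfrac{\tau_i^2}{8}\int_{t_{i-1}}^{t_i}\omega_\alpha(t_j-s)\,ds$ via $(s-t_{i-1})(t_i-s)\le\tau_i^2/4$. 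Summing over $i\ge 2$ produces, after again invoking \eqref{eq: mesh property 1}, a single convolution integral bounded by $C\tau^3\int_0^{t_j}s^{\sigma+\alpha/2-3/\gamma}\omega_\alpha(t_j-s)\,ds\le C\tau^3 t_j^{\sigma+3\alpha/2-3/\gamma}$. Finally, the initial term $-u''(t_1)\K^{1,j}$ is handled separately by noting that $t_1=\tau^\gamma$ and $t_j\ge 2t_1$ for $j\ge 2$, so $\K^{1,j}\le Ct_1^3 t_j^{\alpha-1}$, whereupon the hypothesis $\gamma(\sigma+\alpha/2)\ge 2$ forces $t_1^{\sigma+\alpha/2+1}\le C\tau^3$; the case $j=1$ is settled directly from $\eta_2^1=-u''(t_1)\K^{1,1}$.

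The principal obstacle is Step 1: one must execute the integration by parts carefully, treating the diagonal term $i=j$ separately, so that the sum collapses \emph{exactly} into the telescoping combination $\K^{i-1,j-1}$ versus $(\tau_{i-1}/\tau_i)^3\K^{i,j}$ that Lemma \ref{lem: bound of Kij} addresses; any naive bound that does not exploit the midpoint-vanishing property of $P_i$ would produce only an $O(\tau_i)$ per-step contribution in $e_2$ rather than the $O(\tau_i^2)$ needed for second-order accuracy.
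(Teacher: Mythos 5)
Your plan reproduces the paper's proof essentially verbatim: the same integration by parts giving $\eta_2^j=\sum_{i=1}^{j-1}u''(t_i)[\K^{i,j-1}-\K^{i,j}]-u''(t_j)\K^{j,j}$, the same three-way split into the telescoping difference $\K^{i-1,j-1}-\frac{\tau_{i-1}^3}{\tau_i^3}\K^{i,j}$ handled by Lemma~\ref{lem: bound of Kij} with $a_i=\|u''(t_{i-1})\|_1$, the pieces driven by $u''(t_i)-u''(t_{i-1})$ and by $1-\tau_{i-1}^3/\tau_i^3$, and the leftover $u''(t_1)\K^{1,j}$ term. One bookkeeping caution on that last term: simplifying $t_1^{\sigma+\alpha/2+1}\le C\tau^3$ \emph{before} multiplying by $t_j^{\alpha-1}$ discards exactly the powers of $t_1$ needed to absorb $t_j^{\alpha-1}$ when $t_j$ is comparable to $t_1$ (for $j$ small the resulting $\tau^3t_j^{\alpha-1}$ can exceed the target $C\tau^2\tau_jt_j^{3\alpha/2+\sigma-1-2/\gamma}$ by a factor that grows as $\tau\to0$), so keep the product $t_1^{\sigma+\alpha/2+1}t_j^{\alpha-1}$ intact --- as the paper effectively does by retaining the bound $C\tau_1^{3\alpha/2+\sigma}$ until the final comparison --- and the estimate goes through.
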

\begin{proof}
Splitting $\eta_2^j$ follows by reversing the order of integration then integrating by parts,
\begin{multline*}
\eta_2^j=\sum_{i=1}^ju''(t_i)
    \int_{t_{j-1}}^{t_j}\int_{t_{i-1}}^{\min\{t_i,t\}}(s-t_{i-1/2})\omega_\alpha(t-s)
    \,ds\,dt\\
    =\sum_{i=1}^{j-1}u''(t_i)\int_{t_{i-1}}^{t_i}(s-t_{i-1/2})[\omega_{\alpha+1}(t_j-s)-\omega_{\alpha+1}(t_{j-1}-s)]
    \,ds\\ 
    +u''(t_j)\int_{t_{j-1}}^{t_j}(s-t_{j-1/2})\omega_{\alpha+1}(t_j-s)
    \,ds\\
    =\sum_{i=1}^{j-1}u''(t_i)[\K^{i,j-1}-\K^{i,j}] -u''(t_j)\K^{j,j}\,.
\end{multline*}
Thus, $\eta_2^j$ can be decomposed as: $\eta_2^j=-\eta_{2,1}^j-\eta_{2,2}^j-\eta_{2,3}^j$, with 
\begin{gather}
\eta_{2,1}^j= \sum_{i=2}^j[u''(t_i)-u''(t_{i-1})]\K^{i,j},
    \label{eq: B1j}\\
\eta_{2,2}^j=\sum_{i=2}^ju''(t_{i-1})\biggl(1-\frac{\tau_{i-1}^3}{\tau_i^3}\biggr)
\K^{i,j},\label{eq: B2j}\\
  \eta^j_{2,3}=u''(t_1)\K^{1,j}-\sum_{i=2}^ju''(t_{i-1})\biggl(\K^{i-1,j-1}
    -\frac{\tau_{i-1}^3}{\tau_i^3}\K^{i,j}\biggr).\label{eq: B3j}
\end{gather}  By    the regularity assumption in \eqref{eq: regularity}  and the time mesh properties in \eqref{eq: mesh property 1},   
    \begin{equation}
    \begin{aligned}
  \|\eta_{2,1}^j\|_1 &\le C\sum_{i=2}^j \tau_i^2\int_{t_{i-1}}^{t_i}
   \|u'''(q)\|_1\,dq\int_{t_{i-1}}^{t_i}(t_j-s)^{\alpha-1}\,ds\\
   & \le C\sum_{i=2}^j \tau_i^3t_i^{\sigma+\alpha/2-3}
   \int_{t_{i-1}}^{t_i}(t_j-s)^{\alpha-1}\,ds \\
   &\le C\tau^3\sum_{i=2}^j t_i^{\sigma+\alpha/2-3/\gamma}
   \int_{t_{i-1}}^{t_i}(t_j-s)^{\alpha-1}\,ds
 \\ &\le C\tau^3 
   \int_{t_1}^{t_j}s^{\sigma+\alpha/2-3/\gamma}(t_j-s)^{\alpha-1}\,ds
    \le C\tau^3t_j^{\sigma+3\alpha/2-3/\gamma},
    \end{aligned}\label{eq: star1}
   \end{equation}
for $\gamma \ge 2/(\sigma+\alpha/2).$ The next task is to estimate $\eta^j_{2,2}.$ Seeing that 
       $$1-{\tau_{i-1}^3}/{\tau_i^3}\le 3
    \tau_i^{-1}(\tau_i-\tau_{i-1})\le C\tau^2\tau_i^{-1}t_i^{1-2/\gamma}$$ (the third time mesh property in \eqref{eq: mesh property 1} is used here), yields 
\begin{equation}\label{eq: B21}
\begin{aligned}
\|\eta^j_{2,2}\|_1&\le  C\tau^2\sum_{i=2}^j\tau_i^{-1}t_i^{1-2/\gamma} \|u''(t_{i-1})\|_1 \tau_i^2\int_{t_{i-1}}^{t_i}(t_j-s)^{\alpha-1}\,ds\\
&\le C\tau^3\sum_{i=2}^j \int_{t_{i-1}}^{t_i}(t_j-s)^{\alpha-1}s^{\sigma+\alpha/2-3/\gamma}\,ds\\
&\le C\tau^3\int_{t_1}^{t_j}(t_j-s)^{\alpha-1}s^{\sigma+\alpha/2-3/\gamma}\,ds\\
&\le C\tau^3t_j^{\sigma+3\alpha/2-3/\gamma},~~{\rm for}~~\gamma \ge 2/(\sigma+\alpha/2).
\end{aligned}
\end{equation}
To estimate $\eta^j_{2,3}$, we use again the regularity assumption in \eqref{eq: regularity} and then apply Lemma \ref{lem: bound of Kij}  with $t_{i-1}^{\sigma-2}$ in place of $a_i$, and get  
        \begin{align*}\sum_{i=2}^j\|u''(t_{i-1})\|_1\Big|\K^{i-1,j-1}
    -\frac{\tau_{i-1}^3}{\tau_i^3}\K^{i,j}\Big|
     &\le C\sum_{i=2}^j t_{i-1}^{\sigma-2}\Big|\K^{i-1,j-1}
    -\frac{\tau_{i-1}^3}{\tau_i^3}\K^{i,j}\Big|\\
     &\le C\tau  t_{j-1}^{\alpha-1/\gamma}\max_{i=2}^j \Big(t_{i-1}^{\sigma+\alpha/2-2} \tau_{i-1}^2\Big)\\
    &\le C\tau^3  t_{j-1}^{\alpha-1/\gamma}\max_{i=2}^j \Big(t_{i-1}^{\sigma+\alpha/2-2/\gamma}\Big)\\
   &\le C\tau^3  t_{j-1}^{3\alpha/2+\sigma-3/\gamma},~~{\rm for}~~\gamma \ge 2/(\sigma+\alpha/2).\end{align*}
       By   the definition of  $\eta^j_{2,3}$ and the above contribution, we  obtain
      \begin{equation}\label{eq: bound of eta1,23}
\begin{aligned}\|\eta^j_{2,3}\|_1&\le \|u''(t_1)\|_1\K^{1,j}+C\tau^3  t_{j-1}^{3\alpha/2+\sigma-3/\gamma}\\
    &\le C\tau_1^{\sigma+\alpha/2}\int_0^{t_1}\omega_\alpha(t_j-s)\,ds  + C\tau^3  t_{j-1}^{3\alpha/2+\sigma-3/\gamma}\\
       &\le C\tau_1^{3\alpha/2+\sigma}+  C\tau^3  t_{j-1}^{3\alpha/2+\sigma-3/\gamma},\quad{\rm for}~~\gamma\ge 2/(\sigma+\alpha/2).\end{aligned}
       \end{equation}
     Therefore, to complete the proof, combining  the estimates from \eqref{eq: star1},
\eqref{eq: B21} and  \eqref{eq: bound of eta1,23}, and use the inequality $\tau\le C \tau_j t_j^{1/\gamma-1}$ (follows from the second mesh property  in \eqref{eq: mesh property 1}). \end{proof}

We are ready now to estimate the pointwise error.  The proof relies on the achieved results in Lemmas \ref{lem: bound1 of theta}, \ref{lemma: bound of eta1} and  \ref{lemma: bound of eta2}. As mentioned earlier, the numerical results encapsulate that the imposed assumption on $\gamma$ is not sharp.
\begin{theorem}\label{thm: time convergence} Let $U$ be the time-stepping solution defined by \eqref{fully} and let $u$ be the solution of the fractional reaction-diffusion problem \eqref{eq: ibvp}. Assume that $u$  satisfies the regularity assumptions in \eqref{eq: regularity}. If  the time mesh exponent  $\gamma$ is  greater than the maximum of $\{2/(\sigma+\alpha/2),2/(\sigma+3\alpha/2-1/2)\}$ with $\sigma+3\alpha/2-1/2>0,$  then   
\[\|U^n-u(t_n)\| \le  
C\tau^2 t_n^{\sigma+\alpha-2/\gamma}\le C\tau^2,\quad{\rm for}~~1\le n \le N\,.\]
\end{theorem}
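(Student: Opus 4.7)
The plan is to assemble the three lemmas in the order they were established. First I would combine Lemmas \ref{lemma: bound of eta1} and \ref{lemma: bound of eta2} via the decomposition $\eta^j=\tau_j^{-1}(\eta_1^j+\eta_2^j)$ to obtain the pointwise bound
\[
\|\eta^j\|_1\le \tau_j^{-1}\bigl(\|\eta_1^j\|_1+\|\eta_2^j\|_1\bigr)\le C\tau^2\,t_j^{3\alpha/2+\sigma-1-2/\gamma},\quad 1\le j\le N,
\]
valid under the mesh condition $\gamma>2/(\sigma+\alpha/2)$. Squaring and multiplying by $\tau_j$ gives
\[
\sum_{j=1}^n\tau_j\|\eta^j\|_1^2\le C\tau^4\sum_{j=1}^n\tau_j\,t_j^{2(3\alpha/2+\sigma-1-2/\gamma)}=C\tau^4\sum_{j=1}^n\tau_j\,t_j^{3\alpha+2\sigma-2-4/\gamma}.
\]

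Next I would convert this Riemann-type sum into an integral. Using the mesh property $t_j\le 2^\gamma t_{j-1}$ from \eqref{eq: mesh property 1}, one has $t_j^p\le C\,t^p$ uniformly for $t\in[t_{j-1},t_j]$ regardless of the sign of $p$, so $\sum_{j=2}^n\tau_j t_j^{p}\le C\int_{t_1}^{t_n}s^p\,ds$, and the $j=1$ term $\tau_1 t_1^p=\tau^{\gamma(p+1)}$ is absorbed into $\int_0^{t_1}s^p\,ds$ whenever the integral is finite. With $p=3\alpha+2\sigma-2-4/\gamma$, the integrability condition $p>-1$ rearranges exactly to $\gamma>2/(\sigma+3\alpha/2-1/2)$, the second of the two conditions in the theorem's hypothesis. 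The integral then evaluates to $Ct_n^{3\alpha+2\sigma-1-4/\gamma}$.

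Plugging this back into Lemma \ref{lem: bound1 of theta} yields
\[
\|U^n-u(t_n)\|^2\le C\,t_n^{1-\alpha}\cdot\tau^4\,t_n^{3\alpha+2\sigma-1-4/\gamma}=C\tau^4\,t_n^{2(\sigma+\alpha-2/\gamma)},
\]
and extracting the square root delivers the first inequality of the theorem. The global bound $\|U^n-u(t_n)\|\le C\tau^2$ follows since the hypothesis $\gamma>2/(\sigma+\alpha/2)>2/(\sigma+\alpha)$ forces the exponent $\sigma+\alpha-2/\gamma$ to be strictly positive, so $t_n^{\sigma+\alpha-2/\gamma}\le T^{\sigma+\alpha-2/\gamma}$.

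There is no serious obstacle at this stage: the hard technical work lives in Lemmas \ref{lemma: bound of eta1}--\ref{lemma: bound of eta2}, where the $e_1/e_2$ splitting and the careful cancellation in $\K^{i-1,j-1}-(\tau_{i-1}/\tau_i)^3\K^{i,j}$ do the heavy lifting. The only subtleties to watch are (i) making sure the integrability threshold $p+1>0$ is tracked correctly and produces precisely the second branch of the $\max$ condition, and (ii) verifying that the $j=1$ endpoint does not contribute anything larger than the integral, which is automatic since $p+1>0$ makes $s^p$ integrable at the origin.
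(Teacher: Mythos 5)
Your proposal is correct and follows essentially the same route as the paper: combine Lemmas \ref{lemma: bound of eta1} and \ref{lemma: bound of eta2} through the splitting $\eta^j=\tau_j^{-1}(\eta_1^j+\eta_2^j)$, bound $\sum_j\tau_j\|\eta^j\|_1^2$ by the integral $C\tau^4\int t^{2(3\alpha/2+\sigma-1-2/\gamma)}\,dt$ using the graded-mesh properties, and insert the result into Lemma \ref{lem: bound1 of theta}. Your bookkeeping of where each branch of the $\max$ condition on $\gamma$ enters (the first from the two $\eta$-lemmas, the second from the integrability threshold $p+1>0$) matches the paper's use of these hypotheses, so there is nothing to add.
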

\begin{proof}  From  the  decomposition $\eta^j=\tau_j^{-1}\big(\eta_1^j+\eta_2^j\big)$, and the established bounds of $\eta_1^j$ and $\eta_2^j$ in Lemma 
 \ref{lemma: bound of eta1} and Lemma \ref{lemma: bound of eta2}, respectively, 
\begin{equation}\label{eq: sum theta}
\begin{aligned}  
\sum_{j=1}^n \tau_j\|\eta^j\|_1^2
&\le  C\tau^4 \sum_{j=1}^n \tau_j  t_j^{2(3\alpha/2+\sigma-1-2/\gamma)}\\
&\le C\tau^4 \int_{t_1}^{t_n}  t^{2(3\alpha/2+\sigma-1-2/\gamma)}\,dt\\
&\le C\tau^4 \max\{t_1^{2\sigma+3\alpha-4/\gamma-1},t_n^{2\sigma+3\alpha-4/\gamma-1}\},\quad{\rm for}~~\gamma>2/(\sigma+\alpha/2) \,.\end{aligned}
\end{equation}
Inserting this in the achieved bound in Lemma \ref{lem: bound1 of theta} and using $\gamma\ge 2/(\sigma+3\alpha/2-1/2)$ will  complete the proof. 
\end{proof}
\section{{Fully}-discrete solution} \label{sec: fully-discrete}
To compute our numerical solution, we therefore seek a fully-discrete solution $U_h$ by discretizing \eqref{fully}  in space via the standard Galerkin finite element method. To this end,   let  $\mathcal{T}_h$ be a family of regular (conforming) triangulation  of the  domain $\overline{\Omega}$ and let $h=\max_{K\in \mathcal{T}_h}(\mbox{diam}K),$ where $h_{K}$ denotes the diameter  of the element  $K.$  Let $V_h \subset H^1_0(\Omega)$ denote the usual space of continuous, piecewise-linear functions on  $\mathcal{T}_h$ that vanish on $\partial \Omega$. Let $\W(V_h)\subset  C([0,T];V_h)$ denote the space of linear polynomials on $[t_{n-1},t_n]$ for $1\le  n\le N$,  with coefficients in $V_h.$

Taking the inner of \eqref{fully} with a  test function $\chi \in H^1_0(\Omega)$, and apply the first Green identity. Then, the semi-discrete $L1$   solution $U$  satisfies
\begin{equation} \label{fully0}
 \iprod{U^n-U^{n-1},\chi}+ \int_{t_{n-1}}^{t_n}A(\partial_t^{1-\alpha}U(t),\chi)\,dt= \int_{t_{n-1}}^{t_n} \iprod{f(t),\chi}\,dt,
 ~~{\rm with}~~U^0=u_0,
 \end{equation}
Motivated by this, our fully-discrete computational solution  $U_h\in \W(V_h)$  is defined as:  for $1\le n\le N,$
\begin{equation} \label{fully 2 2}
\iprod{U_h^n-U_h^{n-1},v_h}+ \int_{t_{n-1}}^{t_n}A(\partial_t^{1-\alpha}U_h(t),v_h)\,dt= { \int_{t_{n-1}}^{t_n}\iprod{f(t),v_h}\,dt}\quad \forall~ v_h\in V_h,
\end{equation}
 with $U_h^0=R_h u_0$, where $R_h:H^1_0(\Omega) \to V_h$ is the Ritz projection defined by \[
 A(R_h w,v_h)=A(w,v_h),\quad \forall~v_h \in V_h.\]

Following the derivation used to obtain  \eqref{fully 2}, the scheme in \eqref{fully 2 2} is equivalent to 
\[\begin{aligned}
&\iprod{U_h^n,v_h}+ \frac{\tau_n^\alpha}{\Gamma(\alpha+2)}A(U_h^n,v_h)=\iprod{U_h^{n-1},v_h}-\frac{\alpha\tau_n^\alpha}{\Gamma(\alpha+2)}A(U_h^{n-1},v_h)\\
&- A\bigg(\sum_{j=1}^{n-1}\Big((\omega_{nj}-\omega_{n-1,j})U_h^{j-1}+(\widehat\omega_{nj}-\widehat\omega_{n-1,j})\partial U_h^j\Big),v_h\bigg) +\int_{t_{n-1}}^{t_n} \iprod{f(t),v_h}\,dt\,.
\end{aligned}\] 
 For $1\le p\le d_h:=\dim V_h$, let $\phi_p\in V_h$ denote the $p$th
basis function associated with the $p$th interior node $\vec{x}_p$, so that $\phi_p(\vec{x}_q)=\delta_{pq}$~and
\[U^n_h(\vec{x})=\sum_{p=1}^{d_h}u^n_h(\vec x_p)\phi_p(\vec{x}).\]
We define $d_h\times d_h$ matrices: $\matM=[\iprod{\phi_q,\phi_p}],$ $\matG=[A(\phi_{q},\phi_p)],$ and the
$d_h$-dimensional column vectors ${\bf U}_h^n$  and ${\bf F}^n$ with components~$U^n_h(\vec x_p)$ and $\int_{t_{n-1}}^{t_n} \iprod{f(t),\phi_p}\,dt,$ respectively. Therefore, the fully-discrete scheme \eqref{fully 2 2} has the following matrix representations: 
\begin{multline*}
\Big(\matM+ \frac{\tau_n^\alpha}{\Gamma(\alpha+2)}\matG\Big){\bf U}_h^n
	=\Big(\matM-\frac{\alpha\tau_n^\alpha}{\Gamma(\alpha+2)}\matG\Big){\bf U}_h^{n-1}\\
	-\sum_{j=1}^{n-1}\Big(\bigl(\omega_{nj}-\omega_{n-1,j}\bigr)\matG
	{\bf U}_h^{j-1}+(\widehat\omega_{nj}-\widehat\omega_{n-1,j})\matG
	\partial {\bf U}_h^j\Big) +{\bf F}^n.
\end{multline*}

Therefore, at each time level $t_n,$  the numerical scheme \eqref{fully 2 2} reduces to a finite square linear system, and so the existences of $U_h^n$ follows from its uniqueness. The latter follows from the fact that both  matrices $\matM$ and $\matG$ are positive definite.

Turning now into the error analysis, we introduce the following notations:
\[{ \theta(t)=U_h(t)-R_h \check u(t)}~~{\rm and}~~  \rho(t)=u(t)-R_hu(t), \quad{\rm and}\,.\]
Since  $\check u$ interpolates $u$ at the time nodes, $\theta^n=U_h^n-{  R_h \check u(t_n)}=U_h^n- { R_h u(t_n)}.$  Thence,  the pointwise time error $U_h^n-u(t_n)$ can be decomposed as 
\begin{equation}\label{eq: decomposition}
U_h^n-u(t_n)=[U_h^n-R_h u(t_n)]-[u(t_n)-R_h u(t_n)]=\theta^n-\rho^n.\end{equation}

 The estimate of the second term follows easily from the Ritz projector approximation property and the first regularity assumption in \eqref{eq: regularity},
\begin{equation}\label{eq: v-Rhv}
\|\rho(t_n)\|\le Ch^2\|u(t_n)\|_{2}\le Ch^2,
    \quad\text{for $0\le n\le N$.}
\end{equation}
The next duty is to estimate  $\theta^n$. { From  the weak formulation of problem \eqref{eq: ibvp};
\[\iprod{u(t_j)-u(t_{j-1}),\chi}+ \int_{t_{j-1}}^{t_j}A(\partial_t^{1-\alpha}u(t),\chi)\,dt= \int_{t_{j-1}}^{t_j}\iprod{f(t),\chi}\,dt\quad \forall~ \chi\in H^1_0(\Omega),\]
 the numerical scheme \eqref{fully 2 2}, and the decomposition in \eqref{eq: decomposition}, we have 
 \begin{multline*}
\tau_j\iprod{\partial\theta^j,v_h}+\int_{t_{j-1}}^{t_j} A\bigl(\Ba
(U_h-\check u)(t),v_h\bigr)\,dt
    \\
    =\tau_j\iprod{\partial \rho^n,v_h}+\int_{t_{j-1}}^{t_j} A\bigl(\Ba
(u-\check u)(t),v_h\bigr)\,dt,\quad \forall~ v_h\in V_h\,.
\end{multline*}
From the orthogonality property  of the Ritz projection, and the definition of $\eta$ in \eqref{eq: eta(t)}, 
\begin{equation}\label{eq: new Eq}
\tau_j\iprod{\partial\theta^j,v_h}+\int_{t_{j-1}}^{t_j} A\bigl(\Ba
\theta(t),v_h\bigr)\,dt
    =\tau_j[\iprod{\partial \rho^j,v_h}+ A(\eta^j,v_h)],~ \forall~ v_h\in V_h\,.
\end{equation}
Since  $\theta^0=U_h^0-R_h\check u(0)=R_h u_0-R_h u_0=0$, $\int_{t_{j-1}}^{t_j}\partial_t^{1-\alpha}\theta(t)dt=
\int_{t_{j-1}}^{t_j}\fInt^\alpha \theta'(t)\,dt.$ Now, setting  $v_h= \int_{t_{j-1}}^{t_j}\fInt^\alpha \theta'(t)\,dt$ and applying the Poincar\'e inequality, then  
the second term in \eqref{eq: new Eq} is  $\ge \beta \|\int_{t_{j-1}}^{t_j}\fInt^\alpha\theta'(t)\,dt\|_1^2,$
for some positive constant $\beta$ depends on $\Omega.$ This and the fact that $\partial\theta^j=\theta'(t)$ (constant in time) for $t\in I_j$, lead to   
 \begin{multline*}
\tau_j\int_{t_{j-1}}^{t_j} \iprod{\theta',\fInt^\alpha\theta'}\,dt+
\beta\Big\|\int_{t_{j-1}}^{t_j}\fInt^\alpha\theta'\,dt\Big\|_1^2\\
\le \tau_j\bigiprod{\partial \rho^j,\int_{t_{j-1}}^{t_j}\fInt^\alpha\theta'\,dt}+ \tau_jA\Big(\eta^j,\int_{t_{j-1}}^{t_j}\fInt^\alpha\theta'\,dt\Big)\,.\end{multline*}
By the Cauchy-Schwarz inequality, the last term is
\[ 
 \le 
\frac{1}{2\beta}\tau_j^2\|\eta^j\|_1^2+
\frac{\beta}{2}\Big\|\int_{t_{j-1}}^{t_j}\fInt^\alpha\theta'\,dt\Big\|_1^2,\]
and consequently, 
\begin{equation} \label{eq: theta n01}
\tau_j\int_{t_{j-1}}^{t_j}\iprod{\theta',\fInt^\alpha\theta'}\,dt+\frac{\beta}{2}\Big\|\int_{t_{j-1}}^{t_j}\fInt^\alpha\theta'\,dt\Big\|_1^2\le 
\tau_j\int_{t_{j-1}}^{t_j}\iprod{{\check \rho}',\fInt^\alpha\theta'}\,dt+C\tau_j^2\|\eta^j\|_1^2,
\end{equation}
where $\check \rho(t)=\rho^{j-1}+(t-t_{j-1})\partial \rho^j$ for $t\in I_j$. Dividing both sides by $\tau_j$, and then, summing over the variable $j$ and using the inequality      
\begin{equation}\label{eq: alpha dep}
\int_0^{t_n}\iprod{{\check \rho}',\fInt^\alpha \theta'}\,dt\le 
\frac{1}{2(1-\alpha)^2}\int_0^{t_n}\iprod{{\check \rho}',\fInt^\alpha {\check \rho}'}\,dt+\frac{1}{2}\int_0^{t_n}\iprod{\theta',\fInt^\alpha\theta'}\,dt,\end{equation}
(Lemma \ref{lem: alpha dep} is used here), we reach
\[\int_0^{t_n}\iprod{\theta',\fInt^\alpha\theta'}\,dt\le 
\frac{1}{(1-\alpha)^2}\int_0^{t_n}\iprod{{\check \rho}',\fInt^\alpha {\check \rho}'}\,dt+C\tau_j\|\eta^j\|_1^2\,.\]
Thanks to Lemma \ref{lem: pointwise bound},
\begin{equation}\label{eq: theta discrete}
 \|\theta^n\|^2\le  
C t_n^{1-\alpha}\Big(\int_0^{t_n}|\iprod{{\check \rho}',\fInt^\alpha {\check \rho}'}|\,dt+ \sum_{j=1}^n \tau_j\|\eta^j\|_1^2\Big)\,.\end{equation}}
To estimate  $\int_0^{t_n}|\iprod{{\check \rho}',\fInt^\alpha {\check \rho}'}|\,dt$,  split it as 
 (recall that ${\check \rho}'(t)=\partial \rho^j$ on $I_j$) 
\begin{multline}\label{eq: splitting of xi}
\int_0^{t_n}|\iprod{{\check \rho}'(t),\fInt^\alpha{\check \rho}'(t)}|\,dt\le 
\|\partial\rho^1\|^2 \int_0^{t_1}\int_0^{t}\omega_\alpha(t-s)dsdt\\
+\sum_{j=2}^n\|\partial\rho^j\|\,\|\partial\rho^1\|\int_{t_{j-1}}^{t_j}\int_0^{t_1}\omega_\alpha(t-s)dsdt\\
+\sum_{j=2}^n\|\partial\rho^j\|\sum_{i=2}^j \|\partial\rho^i\| \int_{t_{j-1}}^{t_j}\int_{t_{i-1}}^{\min\{t_i,t\}}\omega_\alpha(t-s)dsdt\,.
\end{multline}
By the definition of the function $\rho$,  the Ritz projection error  bound in (\ref{eq: v-Rhv}) with $u'$ in place of $u$, and the regularity assumption \eqref{eq: regularity}, we obtain 
\begin{multline}\label{eq: bound of rho} 
\|\partial\rho^j\|=\tau_j^{-1}\Big\|\int_{t_{j-1}}^{t_j}(R_h u'-u')(s)\,ds\Big\|\\
\le
Ch^2\tau_j^{-1}\int_{t_{j-1}}^{t_j}\|u'(s)\|_2\,ds\le
Ch^2\tau_j^{-1}\int_{t_{j-1}}^{t_j}s^{\sigma-1}\,ds,\quad{\rm for}~~j\ge 1.\end{multline}
{ If $\sigma-1\ge 0$ (which might not be practically the case), then $\|\partial\rho^j\|\le Ch^2 t_j^{\sigma-1}.$ Thus, 
\begin{equation}\label{eq: bound of phi Ialpha phi n}\int_0^{t_n}|\iprod{{\check \rho}',\fInt^\alpha{\check \rho}'}|\,dt\le Ch^4 t_n^{2\sigma-2}\int_0^{t_n}\int_0^t\omega_\alpha(t-s)dsdt\le Ch^4 t_n^{2\sigma+\alpha-1},~~ {\rm for}~\sigma \ge 1\,.
\end{equation}
Now, turning into the case $\sigma-1<0$, which is probably more interesting. Assuming that $\sigma>(1-\alpha)/2$, a similar bound will be achieved next,  see \eqref{eq: bound of phi Ialpha phi}}. Using \eqref{eq: bound of rho}, the first term on the right-hand side of  \eqref{eq: splitting of xi} is bounded by
\[
Ch^4\tau_1^{ -2}\Big(\int_0^{t_1}s^{\sigma-1}\,ds\Big)^2\omega_{\alpha+2}(t_1)\le Ch^4 t_1^{2\sigma+\alpha-1}\,.\]
Using \eqref{eq: bound of rho} and the first time mesh property in \eqref{eq: mesh property 1},  the second candidate on the right-hand side of  \eqref{eq: splitting of xi} is 
\begin{align*}&\le 
Ch^4\sum_{j=2}^n  
t_j^{\sigma-1}\frac{1}{\tau_1}\int_0^{t_1}s^{\sigma-1}\,ds \int_{t_{j-1}}^{t_j}\int_0^{t_1}\omega_\alpha(t-s)\,ds\,dt\\
&\le 
Ch^4\sum_{j=2}^n  
(t_1t_j)^{\sigma-1}\int_{t_{j-1}}^{t_j}\int_0^{t_1}\omega_\alpha(t-s)\,ds\,dt\\
&\le 
Ch^4\sum_{j=2}^n  
\int_{t_{j-1}}^{t_j}t^{\sigma-1}\int_0^{t_1}s^{\sigma-1}\omega_\alpha(t-s)\,ds\,dt\\
&\le 
Ch^4 
\int_{t_1}^{t_n} t^{\sigma-1}\int_0^ts^{\sigma-1}\omega_\alpha(t-s)\,ds\,dt
\le  Ch^4\int_{t_1}^{t_n} t^{2\sigma+\alpha-2}\,dt,
\end{align*}
while,  the last  term in \eqref{eq: splitting of xi} is 
\begin{align*}&\le 
Ch^4\sum_{j=2}^n  
t_j^{\sigma-1}\sum_{i=2}^j\frac{1}{\tau_i}\int_{I_i}s^{\sigma-1}\,ds \int_{t_{j-1}}^{t_j}\int_{t_{i-1}}^{\min\{t_i,t\}}\omega_\alpha(t-s)\,ds\,dt\\
&\le 
Ch^4\sum_{j=2}^n  
t_j^{\sigma-1}\sum_{i=2}^j t_i^{\sigma-1} \int_{t_{j-1}}^{t_j}\int_{t_{i-1}}^{\min\{t_i,t\}}\omega_\alpha(t-s)\,ds\,dt\\
&\le 
Ch^4\sum_{j=2}^n\sum_{i=1}^j  \int_{t_{j-1}}^{t_j}t^{\sigma-1}\int_{t_{i-1}}^{\min\{t_i,t\}}s^{\sigma-1}\omega_\alpha(t-s)\,ds\,dt\\
&\le 
Ch^4\int_{t_1}^{t_n} t^{\sigma-1}\int_0^t s^{\sigma-1}\omega_\alpha(t-s)\,ds\,dt
\le  Ch^4\int_{t_1}^{t_n} t^{2\sigma+\alpha-2}\,dt\,.
\end{align*}
 Therefore, gathering the above estimates, we conclude that
\begin{equation}\label{eq: bound of phi Ialpha phi}
\int_0^{t_n}|\iprod{{\check \rho}',\fInt^\alpha{\check \rho}'}|\,dt \le Ch^4t_n^{2\sigma+\alpha-1},\quad {\rm for}~~(1-\alpha)/2<\sigma<1\,.
\end{equation}

From the decomposition \eqref{eq: decomposition}, the Ritz projection in \eqref{eq: v-Rhv}, the inequality in \eqref{eq: theta discrete}, the achieved bounds in \eqref{eq: sum theta} and \eqref{eq: bound of phi Ialpha phi n}, and the above estimate,  the error result  in the next convergence theorem holds true. It is  claimed that for a sufficiently time-graded mesh, the proposed  fully-discrete scheme is second-order accurate in both time and space. The numerical results  in the forthcoming section confirm that the imposed assumption on the time mesh exponent $\gamma$ is pessimistic. Furthermore, these results also illustrate $O(h^2)$-rates of convergence in space, although  the imposed condition $\sigma>(1-\alpha)/2$ in the next theorem is not satisfied. Indeed, for the   semi-discrete Galerkin method in space for problem \eqref{eq: ibvp}, an  $O(h^2)$-rate of convergence was carried out without this assumption \cite{KaraaMustaphaPani2018}.
\begin{theorem}\label{thm: CR} Let $U_h$ be the numerical solution defined by \eqref{fully 2 2} and let $u$ be the solution of the fractional reaction-diffusion problem \eqref{eq: ibvp}. Assume that $u$  satisfies the regularity assumptions in \eqref{eq: regularity} with $\sigma>(1-\alpha)/2$. If the time mesh exponent  $\gamma >\max \{2/(\sigma+\alpha/2),2/(\sigma+3\alpha/2-1/2)\},$ then   
\[\|U_h^n-u(t_n)\| \le  
{ C(\tau^2+h^2)},\quad{\rm for}~~1\le n \le N.\]
\end{theorem}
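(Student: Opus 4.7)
The plan is to observe that essentially all of the analytic work has been finished before the theorem statement, so the proof is a matter of assembling the pieces correctly. I would begin with the splitting $U_h^n-u(t_n)=\theta^n-\rho^n$ from equation \eqref{eq: decomposition} and the triangle inequality, so that the bound splits into controlling $\|\rho^n\|$ and $\|\theta^n\|$ separately. The $\rho^n$ term is immediate from the Ritz projection estimate \eqref{eq: v-Rhv} together with the first regularity hypothesis in \eqref{eq: regularity}, giving $\|\rho^n\|\le Ch^2$.

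For $\theta^n$ I would invoke the pointwise estimate \eqref{eq: theta discrete}, which has already been derived by testing the error equation \eqref{eq: new Eq} with $v_h=\int_{I_j}\fInt^\alpha\theta'\,dt$, using coercivity of $A$ and the Poincar\'e inequality to produce the $\|\cdot\|_1$ term, applying Cauchy--Schwarz to the $\eta^j$ contribution, absorbing the $\fInt^\alpha\theta'$ term using Lemma~\ref{lem: alpha dep} (this is where the hidden $(1-\alpha)^{-2}$ factor appears, to be tamed by the hypothesis $\sigma>(1-\alpha)/2$ as discussed in the remark referenced in the introduction), summing in $j$, and finally applying the pointwise Lemma~\ref{lem: pointwise bound}. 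This reduces the task to bounding the two quantities $\sum_{j=1}^n\tau_j\|\eta^j\|_1^2$ and $\int_0^{t_n}|\iprod{\check\rho',\fInt^\alpha\check\rho'}|\,dt$, both of which have been explicitly estimated already.

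The first is controlled exactly as in the proof of Theorem~\ref{thm: time convergence}: the display \eqref{eq: sum theta} yields $\sum_{j=1}^n\tau_j\|\eta^j\|_1^2\le C\tau^4\max\{t_1^{2\sigma+3\alpha-1-4/\gamma},t_n^{2\sigma+3\alpha-1-4/\gamma}\}$ under $\gamma>2/(\sigma+\alpha/2)$. Multiplying by $t_n^{1-\alpha}$ and taking square roots produces an $O(\tau^2)$ contribution provided $\sigma+\alpha-2/\gamma\ge 0$ (which is implied by $\gamma>2/(\sigma+\alpha/2)$) and provided $\gamma\ge 2/(\sigma+3\alpha/2-1/2)$ handles the $t_1$ branch by converting it back to $\tau$. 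The second quantity is bounded by $Ch^4 t_n^{2\sigma+\alpha-1}$ via \eqref{eq: bound of phi Ialpha phi} when $(1-\alpha)/2<\sigma<1$, and by \eqref{eq: bound of phi Ialpha phi n} when $\sigma\ge 1$; multiplying by $t_n^{1-\alpha}$ gives $Ch^4 t_n^{2\sigma}$, bounded by $CT^{2\sigma}h^4$, whose square root contributes $O(h^2)$.

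Combining the $\tau^2$ and $h^2$ contributions with the Ritz bound on $\|\rho^n\|$ yields the claim. The only step I would expect to require any care is the bookkeeping of the exponents of $t_n$ across both pieces: both mesh conditions in the $\gamma$ hypothesis are needed to absorb the time-weight $t_n^{1-\alpha}$ from Lemma~\ref{lem: pointwise bound} into a bounded power, and the hypothesis $\sigma>(1-\alpha)/2$ is exactly what validates the $\rho$ branch through Lemma~\ref{lem: alpha dep}. No new estimates are required beyond those already assembled in the section.
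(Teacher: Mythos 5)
Your proposal is correct and follows the paper's own argument essentially verbatim: the paper likewise proves the theorem by combining the decomposition \eqref{eq: decomposition}, the Ritz bound \eqref{eq: v-Rhv}, the estimate \eqref{eq: theta discrete} for $\theta^n$, the bound \eqref{eq: sum theta} on $\sum_j\tau_j\|\eta^j\|_1^2$, and the bounds \eqref{eq: bound of phi Ialpha phi n} and \eqref{eq: bound of phi Ialpha phi} on $\int_0^{t_n}|\iprod{\check\rho',\fInt^\alpha\check\rho'}|\,dt$. Your exponent bookkeeping for both branches of the $\gamma$ hypothesis is also the same as the paper's; the only minor imprecision is attributing the taming of the $(1-\alpha)^{-2}$ factor to the hypothesis $\sigma>(1-\alpha)/2$ (that hypothesis is what makes the integral $\int_{t_1}^{t_n}t^{2\sigma+\alpha-2}\,dt$ bounded in \eqref{eq: bound of phi Ialpha phi}, while the blowup as $\alpha\to1^-$ is handled separately in Remark \ref{remark: blows up} by avoiding Lemma \ref{lem: alpha dep} altogether), which does not affect the validity of the proof.
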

We end this section with the following remark. 
\begin{remark}\label{remark: blows up} Due to the use of the inequality in  \eqref{eq: alpha dep}, the coefficient $C$ in \eqref{eq: theta discrete} blows up as $\alpha \to 1^-$.  { To control  this phenomena, Lemma \ref{lem: alpha dep} (and consequently, the inequality in \eqref{eq: alpha dep}) should be avoided. Since  ${\check \rho}'(t)=\partial \rho^j$ for $t\in I_j$, an application of  the Cauchy-Schwarz inequality yields 
\[\tau_j\int_{t_{j-1}}^{t_j}\iprod{{\check\rho}',\fInt^\alpha \theta'}\,dt=
\bigiprod{\tau_j \partial \rho^j, \int_{t_{j-1}}^{t_j}\fInt^\alpha \theta'\,dt}\le 
C\tau_j^2\|\partial\rho^j\|^2+\frac{\beta}{2}\bigg\|\int_{t_{j-1}}^{t_j}\fInt^\alpha \theta'(t)\,dt\bigg\|_1^2.\]
Substitute this in \eqref{eq: theta n01} gives 
\[
\int_{t_{j-1}}^{t_j}\iprod{\theta',\fInt^\alpha\theta'}\,dt\le 
C\tau_j\|\partial\rho^j\|^2+C\tau_j\|\eta^j\|_1^2\,.\]}
Summing over $j$, follows by using Lemma \ref{lem: pointwise bound} with $\theta$ in place of $\phi$,  we notice that  
\[
 \|\theta^n\|^2\le  
Ct_n^{1-\alpha} \Big(\sum_{j=1}^n \tau_j\|\partial \rho^j\|^2 + \sum_{j=1}^n \tau_j\|\eta^j\|_1^2\Big)\,,\]
where the constant $C$ in the above  bound does not blowup as $\alpha \to 1^-.$ 

The remaining exercise is to estimate $\sum_{j=1}^n \tau_j^{-1}\|\partial \rho^j\|^2$. From \eqref{eq: bound of rho}, 
\begin{multline*}
\sum_{j=1}^n \tau_j\|\partial \rho^j\|^2\le Ch^4\sum_{j=1}^n \tau_j^{-1}\Big(\int_{t_{j-1}}^{t_j}s^{\sigma-1}\,ds\Big)^2
\le Ch^4\sum_{j=1}^n \int_{t_{j-1}}^{t_j}s^{2\sigma-2}\,ds\\
= Ch^4\int_0^{t_n}s^{2\sigma-2}\,ds\le Ch^4 t_n^{2\sigma-1},~~{\rm for}~~\sigma>1/2.\end{multline*}
For $\sigma =1/2,$ these steps can be slightly adjusted to show  an $O(h^4)$  bound of the above term, but with a logarithmic coefficient ${\rm log}(t_n/t_1)$ for $n\ge 2.$ 
\end{remark}

\section{Numerical  results}\label{sec: numerical results}
To support the achieved theoretical  convergence results in Theorems \ref{thm: time convergence} and \ref{thm: CR}, this section is devoted to perform some numerical experiments (on a typical test problem).  In the fractional model problem \eqref{eq: ibvp}, we choose  $u_0(x,y)=x(1-x)$, $\kappa_\alpha=d = 1$, and  $f=0.$   The time and space domains are chosen to be the intervals $[0,1]$ and  $(0,1),$ respectively.   Separation of variables yields the series  representation of the solution:
\begin{equation}\label{eq: u series}
u(x,t)=8\sum_{m=0}^\infty \lambda_m^{-3}\sin(\lambda_m x) E_{\alpha}(-\lambda_m^2 t^{\alpha}),\quad \lambda_m:=(2m+1)\pi-1,
\end{equation}
where $E_{\alpha}(t):=\sum_{p=0}^\infty\frac{t^p}{\Gamma(\alpha p+1)}$ is the Mittag-Leffler function. 
  
The initial data  $u_0 \in \dot H^{2.5^-}(\Omega)\cap H^1_0(\Omega)$. Thus, as expected from  the regularity analysis in \cite{McLean2010,McLeanMustaphaAliKnio2019}, the regularity properties in \eqref{eq: regularity} hold true for $\sigma=\alpha^-/4.$ 

For the numerical illustration of the convergence rates from the  time-stepping $L1$ scheme, we refine the spatial (uniform) mesh size $h$  so that the time errors are dominant. Therefore, by Theorems \ref{thm: time convergence} and \ref{thm: CR}, we expect to observe $O(\tau^2)$-rates of convergence for  $\gamma>\max\{2/(\sigma+\alpha/2),2/(\sigma+3\alpha/2-1/2)\}=\max\{8/(3\alpha^-),8/(7\alpha^- -2)\}$, with $\sigma+3\alpha/2-1/2>0$.  However, the results in Tables \ref{table 1}--\ref{table 3} are more optimistic, $O(\tau^2)$-rates  were  observed for  $\gamma \ge 2/(\sigma+\alpha) = 8/(5\alpha^-)$, for different values of $\alpha.$  Moreover, these results confirm that the condition $\sigma+3\alpha/2-1/2>0$ is not necessary.

In all tables and figures, we evaluated the series solution $u$ in \eqref{eq: u series} of problem \eqref{eq: ibvp} by truncating the Fourier series in \eqref{eq: u series} after $60$ terms. To measure the error in the numerical solution, we computed 
\[E_{N,M}:=\max_{1\le n\le N} \|U_h^n-u(t_n)\|,\]
 where $N$ is the number of time subintervals, while $M$ is the number of uniform space mesh elements.  Noting that, the spatial $L_2$-norm was evaluated using the two-point Gauss quadrature rule on the finest spatial mesh.  The convergence rates $r_t$ (in time) and $r_x$ (in space) were calculated from the relations 
 \begin{align*}
  r_t&\approx {\rm log}_2\Big(E_{N,M}/E_{2N,M}\Big),\quad{\rm when}~~~h^{r_x} \ll \tau^{r_t},\\
 r_x&\approx {\rm log}_2\Big(E_{N,M}/E_{N,2M}\Big),\quad{\rm when}~~~\tau^{r_t} \ll h^{r_x}.\end{align*}
\begin{table}
\begin{center}
\begin{tabular}{|c|cc|cc|cc|cc|}
\hline
$M$&\multicolumn{2}{c|}{$\gamma=1$ }
&\multicolumn{2}{c|}{$\gamma=2$}
&\multicolumn{2}{c|}{$\gamma=3$}&\multicolumn{2}{c|}{$\gamma=4$}\\
\hline
   20& 3.40e-02&      &1.09e-02&      &2.15e-03&        &8.02e-04&       \\
   40& 2.78e-02& 0.292&5.16e-03& 1.083&7.05e-04&   1.607&2.13e-04& 1.911\\
   80& 2.19e-02& 0.346&2.34e-03& 1.142&2.46e-04&   1.518&5.57e-05& 1.935\\
  160& 1.65e-02& 0.402&1.10e-03& 1.085&8.66e-05&   1.509&1.44e-05& 1.951\\
  320& 1.20e-02& 0.458&5.44e-04& 1.019&3.05e-05&   1.505&3.70e-06& 1.962\\
  640& 8.48e-03& 0.506&2.71e-04& 1.001&1.08e-05&   1.503&9.44e-07& 1.972\\
\hline
\end{tabular}
\caption{Errors and convergence rates ($r_t$) for $\alpha=0.4$ and for different choices of $\gamma.$}
\label{table 1}
\end{center}
\end{table}

  \begin{table}
\begin{center}
\begin{tabular}{|c|cc|cc|cc|cc|}
\hline
$M$&\multicolumn{2}{c|}{$\gamma=1$ }
&\multicolumn{2}{c|}{$\gamma=2$}
&\multicolumn{2}{c|}{$\gamma=2.5$}&\multicolumn{2}{c|}{$\gamma=3$}\\
\hline
   20&   2.51e-02&        &   2.21e-03&        &   8.26e-04&        &   6.50e-04&        \\
   40&   1.50e-02&   0.748&   7.60e-04&   1.540&   2.12e-04&   1.964&   1.65e-04&   1.979\\
   80&   8.32e-03&   0.846&   2.66e-04&   1.513&   5.44e-05&   1.960&   4.17e-05&   1.982\\
  160&   4.53e-03&   0.879&   9.36e-05&   1.509&   1.46e-05&   1.897&   1.05e-05&   1.987\\
  320&   2.53e-03&   0.840&   3.30e-05&   1.505&   3.93e-06&   1.893&   2.65e-06&   1.991\\
  640&   1.47e-03&   0.779&   1.16e-05&   1.503&   1.06e-06&   1.892&   6.64e-07&   1.995\\
\hline
\end{tabular}
\caption{Errors and convergence rates ($r_t$) for $\alpha=0.6$ and for different choices of $\gamma.$}
\label{table 2}
\end{center}
\end{table}

\begin{table}
\begin{center}
\begin{tabular}{|c|cc|cc|cc|cc|}
\hline
$M$&\multicolumn{2}{c|}{$\gamma=1$ }
&\multicolumn{2}{c|}{$\gamma=1.5$}
&\multicolumn{2}{c|}{$\gamma=2$}\\
\hline
   20&   9.7410e-03&         &   1.8813e-03&         &   5.4226e-04&         \\
   40&   4.2516e-03&   1.1961&   6.7825e-04&   1.4719&   1.3563e-04&   1.9993\\
   80&   2.0954e-03&   1.0207&   2.3983e-04&   1.4998&   3.4030e-05&   1.9948\\
  160&   1.0687e-03&   9.7134&   8.4803e-05&   1.4998&   8.5585e-06&   1.9914\\
  320&   5.3632e-04&   9.9475&   2.9991e-05&   1.4996&   2.1776e-06&   1.9746\\
\hline
\end{tabular}
\caption{Errors and convergence rates ($r_t$) for $\alpha=0.8$ and for different choices of $\gamma.$}
\label{table 3}
\end{center}
\end{table}

   For the graphical interpretation, we fixed  $N=160$  and $M=1200$, so the time error is dominant. Figure \ref{fig2}  shows how the error on uniform and nonuniform time meshes varies with $t$ for various  choices of $\alpha$, using a log scale. 
\begin{figure}
\begin{center}
\includegraphics[width=4.25cm, height=5cm]{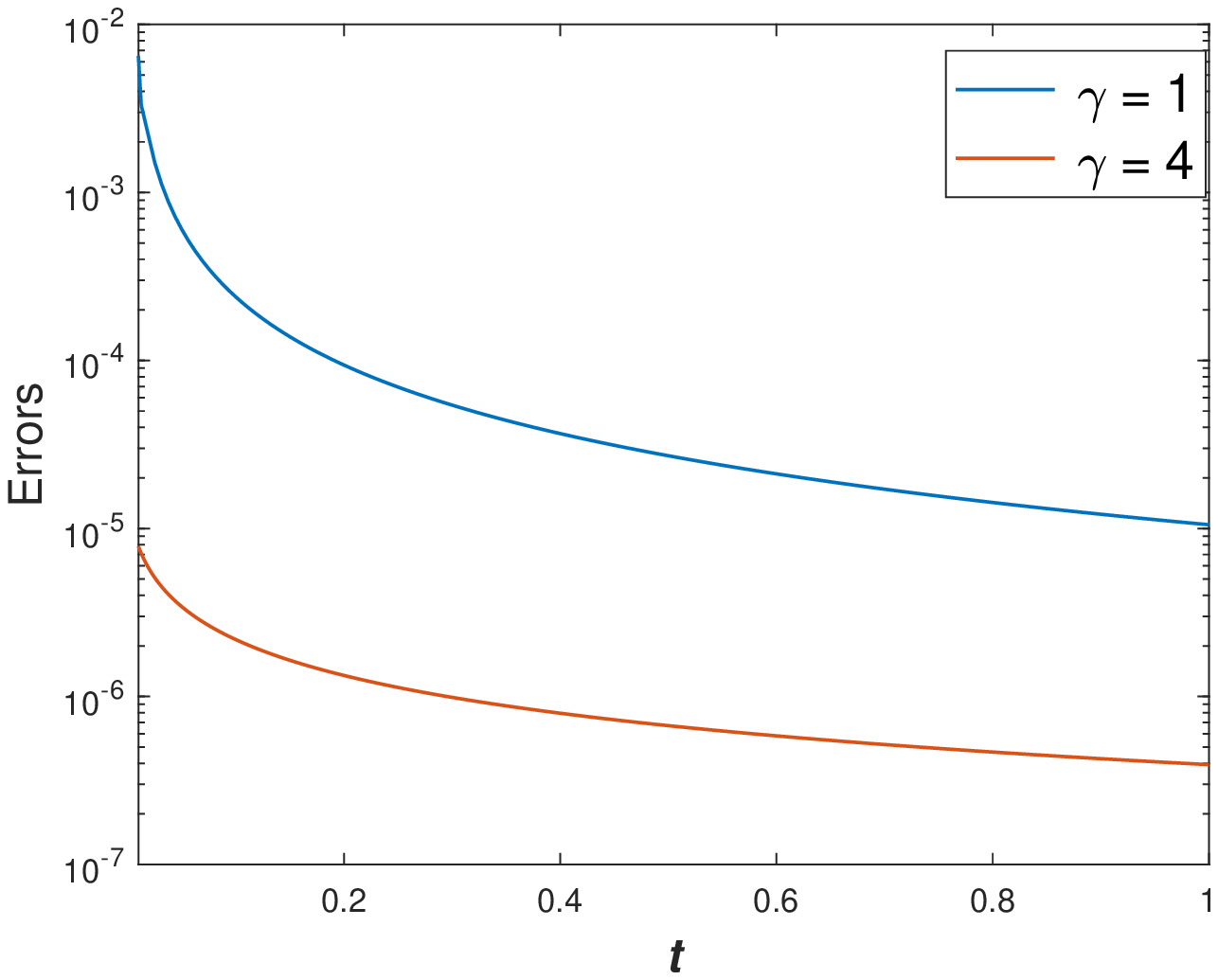}
\includegraphics[width=4.25cm, height=5cm]{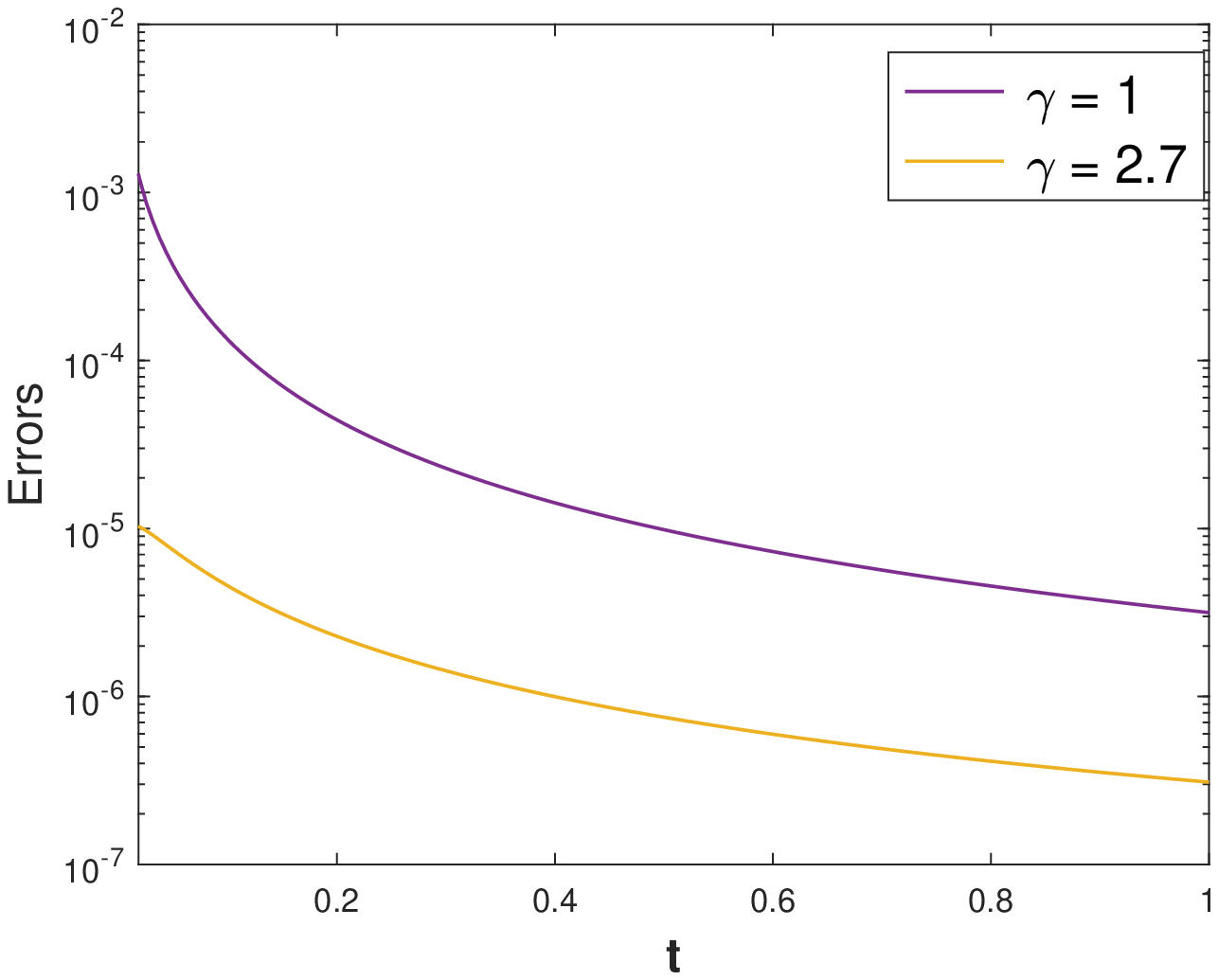}
\includegraphics[width=4.25cm, height=5cm]{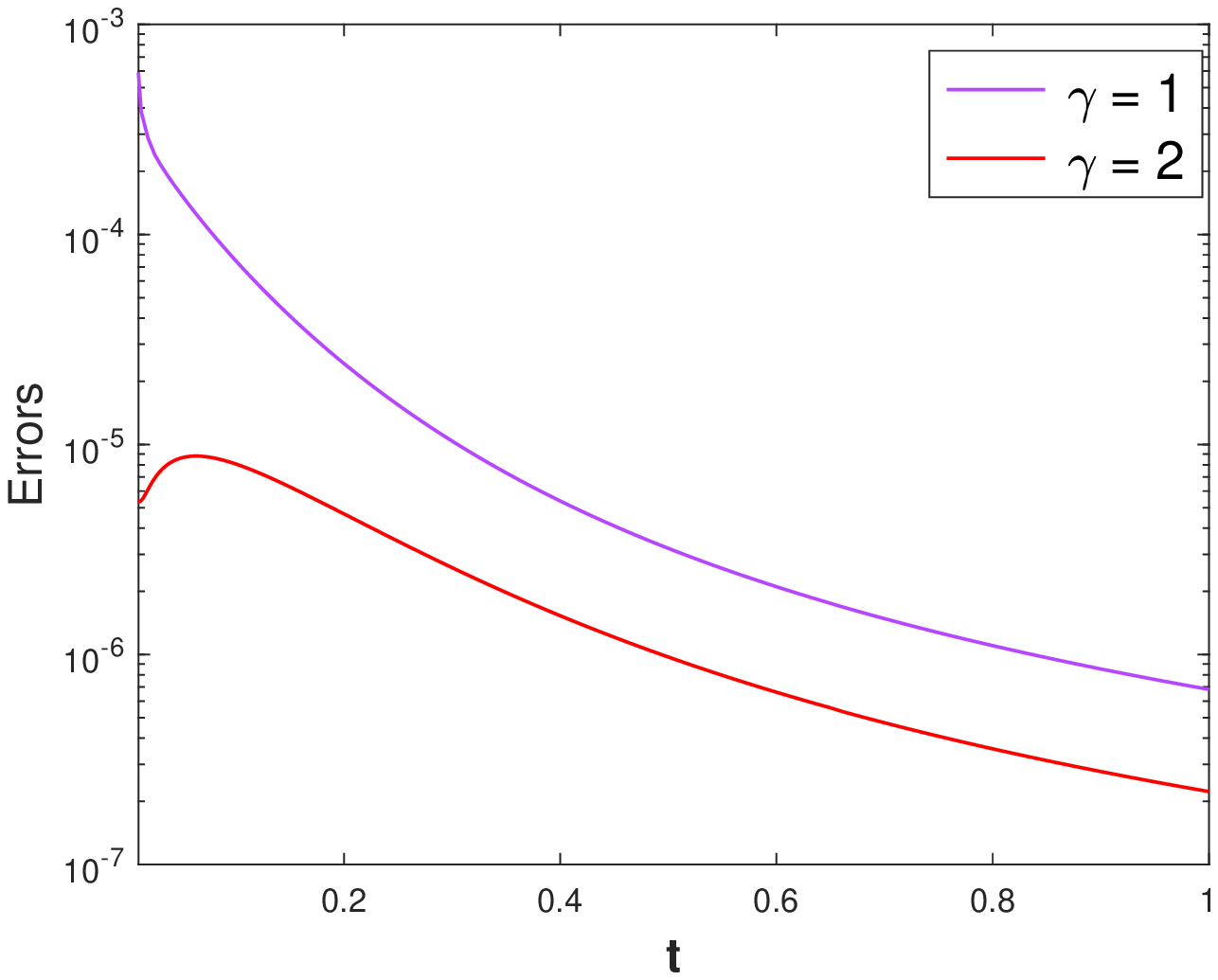}
\caption{The error $\|U_h^n-u(t_n)\|$ as a function of~$t_n$. The fractional exponent $\alpha=0.4$ in the left figure, while $\alpha=0.6$ in the middle one, and $\alpha=0.8$ in the right figure.}
\label{fig2}
\end{center}
\end{figure}

To demonstrate the $O(h^2)$-rates from the spatial discretization by Galerkin finite elements, the time mesh size is refined so that the errors in space  are dominant. The expected convergence orders are displayed in Table \ref{table 4} for $\alpha=0.3, 0.5,$ and $0.8.$  These results also illustrate that the condition $\sigma >(1-\alpha)/2$  in Theorem \ref{thm: CR} is not necessary. This condition holds true if $\alpha >2/3$ because $\sigma=\alpha^-/4,$ however an $O(h^2)$-rate  was observed despite  $\alpha$ not being greater than  $2/3.$  
\begin{table}
\begin{center}
\begin{tabular}{|c|cc|cc|cc|}
\hline
$M$&\multicolumn{2}{c|}{$\alpha=0.3$ }
&\multicolumn{2}{c|}{$\alpha=0.5$}
&\multicolumn{2}{c|}{$\alpha=0.8$}\\
\hline
   10& 8.4612e-04&         & 8.4612e-04&         &8.4612e-04&         \\
   20& 1.9932e-04&   2.0858& 1.9932e-04&   2.0858&1.9932e-04&   2.0858\\
   40& 4.8140e-05&   2.0498& 4.8140e-05&   2.0498&4.8140e-05&   2.0498\\
   80& 1.1795e-05&   2.0291& 1.1449e-05&   2.0720&1.1449e-05&   2.0720\\
  160& 3.0940e-06&   1.9306& 2.9063e-06&   1.9780&2.7481e-06&   2.0587\\
\hline
\end{tabular}
\caption{Errors and the spatial convergence rates $r_x$ for different values of $\alpha$.} 
\label{table 4}
\end{center}
\end{table}


\section{Concluding remarks} { An $L1$ time-stepping scheme for a time-fractional diffusion equation is developed. Over a sufficiently time-graded mesh, it is claimed that the proposed scheme is second-order accurate. Later on, our $L1$ scheme is combined with the standard Galerkin finite elements for the spatial discretization. The error analysis of the induced fully-discrete scheme is studied.  The delivered numerical tests confirmed that the achieved time-space convergence rates  are sharp, but the time-mesh exponent  $\gamma$ can be further relaxed. Due to several difficulties, improving the choice of $\gamma$ is beyond the scope of this work, it will be a subject of future research.}


\begin{thebibliography}{}

\bibitem{Alikhanov2015} A. A. Alikhanov, A new difference scheme for the time fractional diffusion equation, J.
Comput. Phys., 280 (2015), 424--438.

\bibitem{Brunner2004} H. Brunner,  Collocation Methods for Volterra Integral and Related Functional Equations Methods. Cambridge University Press, Cambridge (2004).

\bibitem{BrunnerPedasVainikko1999} H. Brunner, A. Pedas, and  Vainikko, The piecewise polynomial collocation method for weakly singular
Volterra integral equations. Math. Comp., 68 (1999), 1079--1095.

\bibitem{ChandlerGraham1988} G. A. Chandler and I. G. Graham, Product integration-collocation methods for noncompact
integral operator equations, Math. Comp., 50 (1988), 125--138.

\bibitem{ChenLiuAnhTurner2010} C. Chen, F. Liu, V. Anh, and I. Turner, Numerical schemes with high 
spatial accuracy for a variable-order anomalous subdiffusion equations, SIAM J. Sci.
Comput., 32 (2010), 1740--1760.

\bibitem{ChenStynes2019} H. Chen and M. Stynes, Blow-up of error estimates in time-fractional initial-boundary value problems, researchgate.net, 2019.


\bibitem{DixonMcKee1986} J. Dixon and S. McKee, Weakly singular Gronwall inequalities, ZAMM Z. Angew. Math. Mech., 66 (1986), 535--544.


\bibitem{GaoSun2011} G. Gao and Z. Sun, A compact finite difference scheme for the fractional 
sub-diffusion equations, J. Comput. Phys, 230 (2011), 586--565.

\bibitem{JiangMa2011} Y. Jiang and J. Ma, High-order finite element methods for time-fractional partial differential
equations, J. Comp. Appl. Math., 11 (2011), 3285--3290.

\bibitem{JinLazarovZhou} B. Jin, R. Lazarov, and Z. Zhou, An analysis of the $L1$ scheme for the subdiffusion
equation with nonsmooth data, IMA J. Numer. Anal., 36 (2016), 197--221.

\bibitem{JinLiZhou2019} B. Jin, B. Li, and Z. Zhou, Subdiffusion with a time-dependent coefficient:
analysis and numerical solution. Math. Comp., 88 (2019), 2157--2186.

\bibitem{Karaa2018} S. Karaa, Semidiscrete finite element analysis of time fractional parabolic problems: a
unified approach. SIAM J. Numer. Anal., 56 1673–1692, 2018.

\bibitem{KaraaMustaphaPani2018} S. Karaa, K. Mustapha, and A. K. Pani, Optimal error analysis of a FEM for fractional diffusion problems by energy arguments, J. Sci. Comput., 74 (2018), 519--535.

\bibitem{Kopteva2019} N. Kopteva, Error analysis of the $L1$ method on graded and uniform meshes for
a fractional-derivative problem in two and three dimensions. Math. Comp., 88 (2019), 2135--2155.

\bibitem{LiaoLiZhang2018} H-l Liao, D. Li, and J. Zhang, Sharp error estimate of the nonuniform $L1$ formula for linear reaction-subdiffusion equations, SIAM J. Numer. Anal., 56 (2018), 1112--1133.

\bibitem{LiaoMcLeanZhang2019} H.-l. Liao, W. McLean, and J. Zhang, A discrete Gronwall inequality with applications to numerical schemes 
   for subdiffusion problems, SIAM J. Numer. Anal., 57 (2019), 218--237. 
   
\bibitem{LinXu2007} Y. Lin and C. Xu, Finite difference/spectral approximations for the 
time-fractional diffusion equation, J. Comput. Phys., 225 (2007), 1552--1553.

   \bibitem{McLean2010} W. McLean, Regularity of solutions to a time-fractional diffusion equation, ANZIAM J., 52 (2010), 123--138.

\bibitem{McLeanMustapha2007} W. McLean and Mustapha, A second-order accurate numerical method for a fractional wave equation, Numer. Math., 105 (2007), 481--510.

\bibitem{McLeanMustapha2015} W. McLean and K. Mustapha, Time-stepping error bounds for fractional diffusion problems with non-smooth initial data, J. Comput. Phys., 293 (2015), 201--217.

\bibitem{McLeanMustaphaAliKnio2019o} W. McLean, K. Mustapha, R. Ali, and O. M. Knio, Well-posedness of time-fractional
advection-diffusion-reaction equations, Fract. Calc. Appl. Anal., 22 (2019), In press.

\bibitem{McLeanMustaphaAliKnio2019} W. McLean, K. Mustapha, R. Ali, and O. M. Knio, Regularity theory for time-fractional advection-diffusion-reaction equations, Computers \& Mathematics with Applications, (2019), In press.

\bibitem{McLeanThomeeWahlbin1996} W. McLean, V. Thom\'ee, and L. B. Wahlbin, Discretization with variable time steps of an evolution equation with a positive-type memory term, J. Comput. Appl. Math., 69 (1996), 49--69.

\bibitem{Mustapha2011} K. Mustapha, An implicit finite difference time-stepping method for a sub-diffusion equation, with spatial discretization by finite elements, IMA J. Numer. Anal., 31 (2011), 719--739.

\bibitem{Mustapha2013} K. Mustapha, A Superconvergent discontinuous Galerkin method for Volterra integro-differential equations, smooth and non-smooth kernels, Math. Comp., 82 (2013), 1987--2005.

\bibitem{Mustapha2015} K. Mustapha, Time-stepping discontinuous Galerkin methods for fractional diffusion problems.
Numer. Math., 130 (2015), 497--516.

\bibitem{MustaphaAbdallahFursti2014} K. Mustapha, B. Abdallah, and K. M. Furati, A discontinuous Petrov$-$Galerkin method for time-fractional diffusion equations, SIAM J. Numer. Anal., 52 (2014),  2512--2529.

\bibitem{MustaphaSchotzau2014} K.  Mustapha and D. Sch\"otzau, Well-posedness of $hp$-version discontinuous {G}alerkin methods for fractional diffusion wave equations, IMA J. Numer. Anal., 34 (2014), 1426--1446.

\bibitem{Pachpatte1987} B. G. Pachpatte, On the discrete generalisations of Gronwall’s inequality, J. Indian Math. Soc., 37 (1987), 147--156. 


\bibitem{StynesORiordanGracia2017}   M. Stynes, E. O'Riordan, and J. L. Gracia, Error analysis of a finite difference method
on graded meshes for a time-fractional diffusion equation, SIAM J. Numer. Anal., 55 (2017), 1057--1079.

\bibitem{Thomee2006} V. Thom\'ee, Galerkin Finite Element Methods for Parabolic Problems, second ed., Springer, 2006.

\bibitem{WangZhaoChenWeiTang2018}  F. Wang, Y. Zhao, C. Chen, Y. Wei, and Y. Tang, A novel high-order approximate scheme for two-dimensional 
time-fractional diffusion equations with variable coefficient, Computers \& Mathematics with Applications, 78 (2019), 1288--1301.

\bibitem{YanKhanFord2018} Y. Yan, M. Khan, and N. J. Ford, An analysis of the modified $L1$ scheme for time-fractional partial differential equations with nonsmooth data, SIAM J. Numer. Anal.,  56 (2018), 210--227.

\bibitem{ZhaoChenBuLiuTang2017} Y. Zhao, P. Chen, W. Bu, X. Liu, and Y. Tang, Two mixed finite element methods for time-fractional diffusion equations, J. Sci. Computing, 70 (2017), 407--428.

\bibitem{ZhaoZhangShiLiuTurner2016} Y. Zhao, Y. Zhang, D. Shi, F. Liu, and I. Turner, Superconvergence analysis of nonconforming finite element method for two-dimensional time fractional diffusion equations, Applied Mathematics Letters, 59 (2016), 38--47.

\end{thebibliography}
\end{document}